\newcommand{\tensor}{{\otimes}}
  \newcommand{\cS}{{\mathcal S}}
  \newcommand{\cF}{{\mathcal F}}
\newcommand{\As}{{\mathcal A}}
\renewcommand{\O}{{\mathcal O}}
\let\paragraph\S
\renewcommand{\S}{{\mathbb S}}
\newcommand{\Ac}{{A_\C}}
\newcommand{\C}{{\mathbb C}}
\renewcommand{\H}{{\mathbb H}}
\newcommand{\N}{{\mathbb N}}
\newcommand{\Z}{{\mathbb Z}}
\newcommand{\R}{{\mathbb R}}
\DeclareMathOperator{\Tr}{{Tr}}
\DeclareMathOperator{\Nm}{{N}}
\newcommand{\Lie}{\mathop{Lie}}
\newcommand{\Aut}{\mathop{{\rm Aut}}}
\newcommand{\dom}{D}
\newcommand{\Gc}{G_{\C}}
\newcommand{\GC}{G_{\C}}
\newcommand{\Hc}{\H_{\C}}
\newcommand{\WC}{{W\tensor_\R\C}}
\newcommand{\VC}{{V_\C}}
\newcommand{\AC}{A\tensor_{\R}\C}
\newcommand{\Sm}{S_\Delta^mV}
\newcommand{\X}{\underline{\overline{X}}}
\newcommand{\uZ}{\underline{\Z}}
\newcommand{\cdiv}{\mathop{cdiv}}
\newcommand{\dfe}{=}
\newtheorem{lemma}{Lemma}[section]
\newtheorem{definition}[lemma]{Definition}
\newtheorem{corollary}[lemma]{Corollary}
\newtheorem{SpecAss}[lemma]{Assumptions}
\newtheorem{example}[lemma]{Example}
\newtheorem*{remark}{Remark}
\newtheorem*{warning}{Warning}
\newtheorem{proposition}[lemma]{Proposition}
\newtheorem{theorem}[lemma]{Theorem}
\begin{document} 
\numberwithin{equation}{section}
\def\labelenumi{\rm(\roman{enumi})}
\newif\ifdebug
\debugfalse
\ifdebug
\let\oldlabel\label
\renewcommand{\label}[1]{\oldlabel{#1}\marginpar{\footnotesize\tt\color{blue}#1}}
\newcommand{\notiz}[1]{{\tt To do:}{\color{red}\tt #1}{\message{XX: #1}}}
\fi
\title{%
Invariants and Automorphisms for Slice Regular Functions}
\author {Cinzia Bisi \& J\"org Winkelmann}
\begin{abstract}
Let $A$ be one of the following Clifford algebras :  $\mathbb{R}_2 \cong \mathbb{H}$ or $\mathbb{R}_3$. For the algebra $A$,
the automorphism group $Aut(A)$ and its invariants are well known. In this paper we will describe the invariants of the automorphism group of the 
algebra of slice regular functions over $A$. 
\end{abstract}
\subjclass{}%
\address{%
Cinzia Bisi \\
Department of Mathematics and Computer Sciences\\
Ferrara University\\
Via Machiavelli 30\\
44121 Ferrara \\
Italy
}
\email{bsicnz@unife.it \newline
 ORCID : 0000-0002-4973-1053
}
\address{%
J\"org Winkelmann \\
IB 3/111\\
Lehrstuhl Analysis II \\
Fakult\"at f\"ur Mathematik\\
Ruhr-Universit\"at Bochum\\
44780 Bochum \\
Germany\\
}
\email{joerg.winkelmann@rub.de\newline
  ORCID: 0000-0002-1781-5842
}
\thanks{
The two authors were partially supported by GNSAGA of INdAM.
C. Bisi was also partially supported by PRIN \textit{Variet\'a reali e complesse:
geometria, topologia e analisi armonica}. 
}
\maketitle
\tableofcontents
\section{Introduction}

The theory of slice regular functions was introduced by G. Gentili and D. Struppa in two seminal papers in 2006 \cite{GS1} and in \cite{GS2} : they used the fact that $\forall I \in \mathbb{S}_{\mathbb{H}} = \{ J \in \mathbb{H} \,\, | \,\, J^2=-1 \}$ the real subalgebra $\mathbb{C}_{I}$ generated by 1 and $I$ is isomorphic to $\mathbb{C}$ and they decomposed the algebra $\mathbb{H}$ into a "book-structure" via these complex "slices" :
$$
\mathbb{H} =\cup_{I \in \mathbb{S}_{\mathbb{H}} }   \mathbb{C}_I
$$
On an open set $\Omega \subset \mathbb{H},$ they defined a differentiable function $f \colon \Omega \to \mathbb{H}$ to be (Cullen or) slice regular if, for each $I \in \mathbb{S},$ the restriction of $f$ to $\Omega_I= \Omega \cap \mathbb{C}_I$
is a holomorphic function from $\Omega_I$ to $\mathbb{H},$ both endowed with the complex structure defined by left multiplication with $I.$
This definition covers all functions given by
convergent power series of the form :
$$
\sum_{n \in \mathbb{N}_0} q^n a_n 
$$
with $\{ a_n \}_{n \in \mathbb{N}_0} \subset \mathbb{H}.$ \\
Later on, the approach introduced by R. Ghiloni and A. Perotti in 2011, \cite{GP}, for an alternative $*$-algebra $A$ over $\mathbb{R}$ makes use of the complexified algebra $A \otimes_{\mathbb{R}} \mathbb{C},$ denoted by $A_{\mathbb{C}}$. \\
Let us denote its elements as $a+\iota b$ where $a, b \in A$ and $\iota$ is to be considered as the imaginary unit of $\mathbb{C}$.

For any slice regular function, and for any $I \in \mathbb{S}_{\mathbb{H}},$ the restriction $f \colon \mathbb{C}_I \to \mathbb{H}$ can be lifted through the map $\phi_I \colon \mathbb{H}_{\mathbb{C}} \to \mathbb{H},$
$\phi_I (a+\iota b) := a+I b$ and it turns out that the lift does not depend on $I.$  In other words, there exists a holomorphic function $F \colon \mathbb{C} \cong \mathbb{R}_{\mathbb{C}} \to \mathbb{H}_{\mathbb{C}}$ which makes the following diagram commutative for all $I \in \mathbb{S}_{\mathbb{H}}.$ \\
$$
\begin{tikzcd}
 & \mathbb{C} \arrow[r, "F"] \arrow[d, "\phi_I"']
& \mathbb{H}_{\mathbb{C}} \arrow[d, "\phi_I"]  \\
& \mathbb{H}  \arrow[r, "f"] & \mathbb{H}  
\end{tikzcd}
$$
After the first definitions were given, the theory of slice regular functions knew a big development : see, among the others, the following references \cite{GSSMono}, \cite{BW2}, \cite{BDMW}, \cite{BW1}, \cite{BW3}, \cite{AB2},\cite{AB1}, \cite{BS}, \cite{BG}, \cite{BM}, \cite{BC23}. \\
Non-commutative
associative
division algebras admit many automorphisms, because
$x\mapsto y^{-1}xy$ is an automorphism for every invertible element $y$.

The ``essential'' properties of a number or a function should not
be changed by automorphisms.

In the case of the algebras under consideration here, there is an
{\em antiinvolution} $x\mapsto \bar x$ which commutes with all
automorphisms. As a consequence, $\Nm(x)=x\bar x$ and $\Tr(x)=x+\bar x$ are
{\em invariant} under automorphisms. In fact, for $A=\H$
we have the equivalence:
\[
\Nm(z)=\Nm(w)\text{ and } \Tr(z)=\Tr(w)
\iff \exists \,\,\, \phi\in \Aut(A):\phi(z)=w
\]
(Here $\phi$ is an automorphism of $A$ as an $\R$-algebra.)

This raises the question whether a similar correspondence holds
not only for the elements in the algebra, but also for slice-regular
functions of this algebra.

As it turns out, essentially this is true, but only via the associated
stem functions and up to a condition on the multiplicity with which
  values in the center of $\Ac$ are assumed. % (see \paragraph\ref{ss-cdiv}).
  To state the latter condition, in \paragraph\ref{ss-cdiv} 
  we introduce the notion of a
``{\em central divisor}'' $\cdiv$.

More precisely, the conjugation $x\mapsto \bar x$
on the algebra $A$ defines a
conjugation on the space of slice-regular functions
which allows the definition of $\Tr$ and $\Nm$ as before.

This conjugation corresponds to a conjugation on the associated
space of stem functions $F:\C\to A\tensor_\R\C$ defined as
\[
  F^c:z\mapsto \overline{\left (F( z)\right)}
  \]
where $\overline{(q\tensor w)}$ is defined as
$\bar q\tensor \left( w \right)$ (with $q\in A, w\in\C$).
Again, 
conjugation induces $\Tr$ and $\Nm$ as
\[
\Tr(F):z\mapsto (F+F^c)(z),\quad \Nm(F):z\mapsto (F(z))(F^c(z))
\]

With these definitions, the correspondence
between regular functions and stem functions is compatible
with conjugation. As a consequence, if $F$ is the stem function for $f$,
then $F^c$ is the stem function for $f^c$. Moreover
$\Nm(F)$, resp. $\Tr(F)$, are the stem functions for $\Nm(f)$, resp. $\Tr(f)$.

As it turns out,
essentially
$\Nm(F)$, $\Tr(F)$ and $\cdiv(F)$ ( or, equivalently,
$\Nm(f)$, $\Tr(f)$ and $\cdiv(f)$) characterize $F$ (equivalently, $f$)
up to replacing $F$ with $z\mapsto \phi(z) \left( F(z)\right)$ for
some holomorphic map $\phi$  from $\C$ to the automorphism group of
the complex algebra $\AC$. \\

  Here $\cdiv$ is an additional invariant
  which we introduce in \paragraph\ref{ss-cdiv}
  for slice regular functions which are not slice preserving.

  In this paper we describe the group of automorphisms of the algebra of slice regular functions with values in $ \mathbb{H}$
and $ \mathbb{R}_3 \cong \mathbb{H} \oplus \mathbb{H}$.%
\footnote{
In a forthcoming paper we will investigate other algebras as well.
}
\\
Our Main Theorem is the following :
\begin{theorem}\label{mainth}
    Let $\H$ denote the algebra of quaternions,
  $\Hc=\H\tensor_\R\C$, $G=Aut(\H)\cong SO(3,\R)$,
  $\GC=Aut(\Hc)\cong SO(3,\C)$.
  Let $\dom\subset\C$ be a symmetric domain
  and let $\Omega_\dom\subset \H$ denote the corresponding axially symmetric
  domain.

  Let $f,h:\Omega_\dom\to\H$ be slice regular functions and
  let $F,H:\dom\to\Hc$
  denote the corresponding stem functions.

    \begin{enumerate}[label=\alph*)]
  \item
  Assume that neither $f$ nor $h$ are  slice preserving.
  
  Then the following are equivalent:
  \begin{enumerate}[label=(\roman*)]
  \item
    $f$ and $h$ have the same invariants $\cdiv$, $\Tr$, $\Nm$.
  \item
    $F$ and $H$ have the same invariants $\cdiv$, $\Tr$, $\Nm$.
  \item
    $\cdiv(F)=\cdiv(H)$ and for every $z\in\dom$ there exists an element
    $\alpha\in Aut(\Hc)=\GC$ such that $F(z)=\alpha(H(z))$.
  \item
    There is a holomorphic map $\phi:\dom\to\GC$
    such that $F(z)=\phi(z)\left(H(z)\right)\ \forall z\in\dom$.
  \item
      There is a holomorphic map $\alpha:\dom\to\Hc^*$
      such that
      \[
      F(z)=\alpha(z)^{-1}\cdot H(z)\cdot\alpha(z)
      \]
  \end{enumerate}
\item
  Assume that $f$ is slice preserving.
      Then the following are equivalent:
  \begin{enumerate}[label=(\roman*)]
  \item
    $f=h$.
  \item
    $F=H$.
  \item
    For every $z\in\dom$ there exists an element
    $\alpha\in Aut(\Hc)=\GC$ such that $F(z)=\alpha(H(z))$.
\item
    There is a holomorphic map $\phi:\dom\to\GC$
    such that $F(z)=\phi(z)\left(H(z)\right)\ \forall z\in\dom$.
  \item
      There is a holomorphic map $\alpha:\dom\to\Hc^*$
      such that
      \[
      F(z)=\alpha(z)^{-1}\cdot H(z)\cdot\alpha(z)
      \]
   \end{enumerate}
    \end{enumerate}
    \end{theorem}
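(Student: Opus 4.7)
The plan is to reduce (i)$\iff$(ii) to the
stem-function dictionary, then run the cycle
(ii)$\Rightarrow$(iii)$\Rightarrow$(iv)$\Rightarrow$(v)$\Rightarrow$(ii)
for part (a), and finally specialize to part (b). The equivalence
(i)$\iff$(ii) is built into the correspondence $f\leftrightarrow F$, which
commutes with the antiinvolution and hence with $\Tr$ and $\Nm$, and
satisfies $\cdiv(f)=\cdiv(F)$ by construction. The easy implications are:
(iv)$\Rightarrow$(iii) by pointwise evaluation; (iii)$\Rightarrow$(ii) and
(v)$\Rightarrow$(ii) because each $\alpha\in\GC$ commutes with the
antiinvolution and preserves the center of $\Hc$, so $\Tr$, $\Nm$ and
$\cdiv$ are all preserved under pointwise substitution; and
(v)$\Rightarrow$(iv) by setting $\phi(z)(q)=\alpha(z)^{-1}q\,\alpha(z)$.

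The main work is (ii)$\Rightarrow$(iv). Using $\Hc\cong M_2(\C)$ and
$\GC\cong PGL_2(\C)$ acting by conjugation, two non-central elements of
$\Hc$ are $\GC$-conjugate precisely when their characteristic polynomials,
equivalently their $\Tr$ and $\Nm$, coincide. Hence for every $z\in\dom$
outside the support $S$ of $\cdiv(F)=\cdiv(H)$, the elements $F(z)$ and
$H(z)$ lie in a common $\GC$-orbit. To upgrade this to a holomorphic
choice I first build local holomorphic solutions near each
$z_0\in\dom\setminus S$: the orbit map
$\GC\to \GC\cdot H(z_0)$ is a smooth principal bundle with structure group
the centralizer of $H(z_0)$, so by the holomorphic implicit function
theorem a local holomorphic section $\phi$ with $F(z)=\phi(z)(H(z))$
exists. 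Two such local solutions differ, over a point $z$, by an element
of $\mathrm{Cent}_{\GC}(H(z))$, which on $\dom\setminus S$ is a rank-one
abelian complex subgroup isomorphic to $\C^*$; the resulting cocycle takes
values in $\O^*$ and is trivialized on the Stein planar domain
$\dom\setminus S$ by the vanishing of $H^1(\dom\setminus S,\O^*)$
(or equivalently by explicitly writing the conjugator in terms of an
eigenbasis depending holomorphically on $z$). This produces a holomorphic
$\phi$ on $\dom\setminus S$ with $F(z)=\phi(z)(H(z))$. To extend across
$S$ I invoke the defining property of $\cdiv$: it records exactly the
multiplicity with which the non-central part of $F$ vanishes at the points
where $F$ hits the center of $\Hc$, so the equality $\cdiv(F)=\cdiv(H)$
prevents the local conjugator from acquiring a pole along $S$, and it
extends holomorphically by Riemann's removable singularity theorem.

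For (iv)$\Rightarrow$(v) I lift $\phi:\dom\to\GC\cong PGL_2(\C)$ to
$\alpha:\dom\to GL_2(\C)\cong\Hc^*$ through the central $\C^*$-extension
$\Hc^*\to\GC$. On a simply connected $\dom$ this is a standard covering
lift; for a general symmetric planar $\dom$ the obstruction again lies in
$H^1(\dom,\O^*)$, which vanishes on any non-compact Riemann surface
(Stein), giving a global holomorphic lift $\alpha$, and then
$\phi(z)(H(z))=\alpha(z)^{-1}H(z)\alpha(z)$ by definition of the
$PGL_2(\C)$-action. Part (b) is a direct specialization: if $f$ is slice
preserving then $F$ takes values in $\R\cdot 1\subset\AC=\Hc$, which is
pointwise fixed by every element of $\GC$; consequently (iii), (iv), (v)
all read $F(z)=H(z)$, so $f=h$ and the five equivalences follow at once.

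The hard part will be the upgrade from pointwise conjugacy to a globally
holomorphic conjugator \emph{across} the central locus $S$. Away from $S$
the argument is the classical combination of orbit geometry with a
Cousin-type vanishing; the extension over $S$ is exactly where the new
invariant $\cdiv$ earns its keep. Without it, $F$ and $H$ could differ by
an extra factor vanishing along $S$ that is invisible to $\Tr$ and $\Nm$,
so verifying that the equality of central divisors supplies precisely the
jet-order data required for Riemann extension is the delicate point of the
proof.
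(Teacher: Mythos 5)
Your overall architecture (stem-function dictionary for (i)$\iff$(ii), the easy pointwise implications, the lift $\GC\to\Hc^*$ via a $\C^*$-principal bundle on a Stein Riemann surface for (iv)$\Rightarrow$(v), and the specialization in part (b)) matches the paper. But the hard implication (ii)$\Rightarrow$(iv) has two genuine gaps. First, on $\dom\setminus S$ (the complement of the support of $\cdiv$) the centralizer of $H(z)$ is \emph{not} always $\C^*$: writing $H=H_0+\hat H$ with $\hat H$ traceless, the condition $\hat H(z)\ne 0$ does not exclude $B(\hat H(z),\hat H(z))=\Nm(\hat H)(z)=0$; at such null points $H(z)$ corresponds to a nonzero nilpotent in $M_2(\C)$ and its centralizer in $PGL_2(\C)$ is the additive group $\C$, not $\C^*$. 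Your cocycle therefore does not take values in $\O^*$ on all of $\dom\setminus S$, and even away from the null locus the family of centralizers is only a \emph{twisted} form of $\C^*$ (monodromy may swap the two eigenlines, acting by $t\mapsto t^{-1}$), so $H^1(\dom\setminus S,\O^*)=0$ is not the relevant vanishing. The paper handles exactly this with the exact sequence $0\to\cS\to\O\to\As\to 0$, where $\cS$ is locally $\underline{\Z}$ only on the open set where $B\ne 0$, and proves $H^1(\dom,\As)=0$ via a $2\!:\!1$ covering and the Leray spectral sequence (Proposition~\ref{coh}); this step cannot be replaced by a bare Cousin-II argument.

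Second, the extension of $\phi$ across $S$ is asserted rather than proved: a conjugator constructed on $\dom\setminus S$ is only determined up to a section of the centralizer bundle and may a priori have an essential singularity at points of $S$; "$\cdiv(F)=\cdiv(H)$ prevents a pole" is precisely the claim that needs an argument, and Riemann extension requires boundedness of a $\GC$-valued map, which you do not establish. The paper sidesteps the extension problem entirely by first passing to $\hat F=\frac12(F-F^c)$, $\hat H=\frac12(H-H^c)$ (Lemma~\ref{traceless}) and then using $\cdiv(F)=\cdiv(H)$ to write $\hat F=\lambda\tilde F$, $\hat H=\lambda\tilde H$ with a common scalar $\lambda$ and \emph{nowhere-vanishing} $\tilde F,\tilde H$ (Proposition~\ref{prop-reduce}); the cohomological argument is then run on all of $\dom$ for $\tilde F,\tilde H$, and linearity of the $\GC$-action transfers the conjugator back to $F,H$ with no removable-singularity step. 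I would recommend restructuring your proof along these lines, or else supplying (a) a treatment of the null locus and of the $\Z/2$-twisting of the centralizer bundle, and (b) an actual boundedness estimate near $S$.
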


\begin{remark}
  The notion of a ``central divisor'' is defined only if the function
  is not slice preserving. This is similar to the ordinary complex situation
  where the divisor of a holomorphic function is defined only if it is not
  constantly zero.
\end{remark}

Theorem~\ref{mainth} is proved in \paragraph\ref{pf-main}.

We also derive a corresponding result
for the Clifford algebra $\R_3$ (which as
$\R$-algebra is isomorphic to $\H\oplus\H$)
(Theorem~\ref{thm-r3}).

Quaternions can be used to describe orthogonal complex structures (OCS)
on $4$-dimensional Euclidean space, since their imaginary units pa\-ra\-metrize automorphisms of $\mathbb{R}^4 \cong \mathbb{C}^2$. An injective slice regular function  on a symmetric slice domain of $\mathbb{H}$ minus the reals, defines a new OCS via the push-forward of the standard one. It would be very interesting in the authors' opinion to understand if slice regular functions that are in the same orbit, by the action of automorphisms as explained in this paper, induce the same OCS or isomorphic OCS, \cite{GSS}. 

\subsection{Related work}

In \cite{AD1}, Altavilla and de Fabritiis investigated equivalence
relations for {\em semi-regular} functions.

These semi-regular functions are locally $*$-quotient of slice regular
functions and correspond to meromorphic functions in complex analysis.

Semi-regular functions have the advantage that they are always invertible
unless they are zero-divisors. This eases the use of linear algebra.

In \cite{AD1} it is proved that for any two semi-regular functions
$f$ and $g$
the following properties are equivalent:

\begin{itemize}
\item
  $\Tr(f)=\Tr (g)$ and $\Nm(f)=\Nm(g)$.
\item
  There is a semi-regular function $h$
  with $h*f*h^{-*}=g$.
\item
  The ``Sylvester-operator'' $S_{f,-g}:x\mapsto f*x-x*g$
  is not invertible.
\end{itemize}

In comparison, we obtain a stronger conclusion (namely, conjugation
by a {\em regular} function instead by a function which is only
semi-regular), but for this we need an additional assumption, namely
equality not only of trace $\Tr$ and norm $\Nm$, but also of the
``central divisor'' $\cdiv$ which we introduce in \paragraph\ref{ss-cdiv}.
See \paragraph~\ref{an-ex} for an instructive example.

\section{Preparations}\label{sect-prep}

Here we collect basic facts and notions needed for our main result.
First we discuss conjugation, norm and trace,
then types of domains, then slice regular
functions and stem functions, followed by investigating conjugation,
norm and trace for function algebras.

\subsection{Conjugation, norm and trace}

Let $A$ be an
alternative
$\R$-algebra  with $1$
and let $x\mapsto \bar x$ be an {\em
  antiinvolution}, i.e., an $\R$-linear map such that
$\overline{xy}=(\bar y)\cdot(\bar x)$
and $\overline{(\bar x)}=x$ for all $x,y\in A$.
(An $\R$-algebra with an antiinvolution is often called
$*$-algebra.)% 
\footnote{In this article we are mainly concerned with
    the algebra of quaternions and $\R_3\cong\H\oplus\H$,
    but we would also like to prepare for
    a future article on other $\R$-algebras.}

\begin{definition}\label{def-n-tr}
  Given an $\R$-algebra $A$ with antiinvolution $x\mapsto\bar x$,
  we define:
  \begin{align*}
    \text{Trace: } & \Tr(x)=x+\bar x\\
    \text{Norm: } & \Nm(x)=x\bar x\\
  \end{align*}
\end{definition}

Consider
\[
C=\{x\in A: x= \bar x\}.
\]

We assume that $C$ is {\em central} and associates with
all other elements, i.e.,
\[
\forall c\in C, x,y\in A:  cx=xc\text{ and }c(xy)=(cx)y
\]
It is easy to verify that $C$ is a subalgebra
(under these assumptions, i.e., if $C$ is assumed to be central).

\begin{lemma}
  Under the above assumptions the following properties hold:
  
  \begin{enumerate}
    \item
      $\forall x\in \R:x=\bar x$.
        \item
    $\forall x\in A: \Nm(x),\Tr(x)\in C=\{ y\in A: y=\bar y\}$
  \item
    $\forall x\in A: x\bar x=\bar x x$
  \item
    $\forall x\in A: \Nm(x)=\Nm(\bar x)$.
  \item
    $\forall x,y\in A: \Nm(xy)=\Nm(x)\Nm(y)$.
  \end{enumerate}
\end{lemma}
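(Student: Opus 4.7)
The plan is to establish the five claims in sequence, with each item feeding into the next, and to reduce the only non-routine step, multiplicativity of the norm, to an application of Artin's theorem on alternative algebras.

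For (1) I first check that $\bar 1 = 1$. Evaluating $\overline{\bar 1 \cdot 1}$ in two ways: on the one hand $\bar 1 \cdot 1 = \bar 1$ and the involution axiom gives $\overline{\bar 1} = 1$; on the other hand the antiinvolution formula gives $\overline{\bar 1 \cdot 1} = \bar 1 \cdot \overline{\bar 1} = \bar 1 \cdot 1 = \bar 1$. Comparing yields $\bar 1 = 1$, and then $\R$-linearity of the map $x \mapsto \bar x$ gives $\bar x = x \bar 1 = x$ for every $x \in \R$. Item (2) is immediate: applying the antiinvolution to $\Tr(x) = x + \bar x$ permutes the summands, and applying it to $\Nm(x) = x \bar x$ gives $\overline{x\bar x} = \overline{\bar x}\cdot \bar x = x \bar x$, so both $\Tr(x)$ and $\Nm(x)$ are fixed by the antiinvolution and therefore lie in $C$.

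With (2) at our disposal, (3) is a one-line consequence. The element $\Tr(x) = x + \bar x$ is central by (2), hence commutes with $x$; subtracting $x^2$ from $x \cdot \Tr(x) = \Tr(x) \cdot x$ yields $x\bar x = \bar x x$. Then (4) follows directly: $\Nm(\bar x) = \bar x \cdot \overline{\bar x} = \bar x \cdot x = x \bar x = \Nm(x)$ by (3).

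The interesting step is (5). Direct expansion gives $\Nm(xy) = (xy)(\bar y \bar x)$, and the temptation is to reassociate as $x(y\bar y)\bar x$ and then pull out the central element $\Nm(y) = y\bar y$. The obstacle is that neither the alternative laws nor the Moufang or flexible identities apply verbatim to an expression in four seemingly distinct letters. To circumvent this I would use the formulas $\bar x = \Tr(x) - x$ and $\bar y = \Tr(y) - y$ from (2), which show that $\bar x$ and $\bar y$ are $C$-linear combinations of $1, x, y$. Hence all four letters $x, y, \bar x, \bar y$ lie in the subalgebra $B$ generated by $x$ and $y$ together with the nuclear subring $C$. By Artin's theorem for alternative algebras, the subalgebra $\R\langle x, y\rangle$ is associative, and since every element of $C$ associates with every pair of elements in $A$ by hypothesis, the enlargement $B$ is associative as well. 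Inside this associative $B$ the manipulation is routine: $(xy)(\bar y \bar x) = x(y\bar y)\bar x = \Nm(y)\cdot x\bar x = \Nm(y)\Nm(x)$, and the commutativity of $C$ (a formal consequence of centrality) rewrites this as $\Nm(x)\Nm(y)$, completing the proof.
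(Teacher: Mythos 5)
Your proof is correct and follows essentially the same route as the paper's: deriving $\bar 1=1$ from the antiinvolution identity and extending by $\R$-linearity, reading off (2)--(4) from centrality of the trace, and reducing (5) to the associative case via Artin's theorem applied to the subalgebra generated by $x$ and $y$ (enlarged by $C$). Your explicit remark that $\bar x,\bar y$ lie in that subalgebra because $\bar x=\Tr(x)-x$ is exactly the observation the paper makes in passing, so there is nothing to add.
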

\begin{proof}
  \begin{enumerate}
\item
    %The antiinvolution defines a surjective
    %ring homomorphism from $A$ to its opposite
    %ring $A^{opp}$. By unicity of the neutral element it
  %follows that $\bar 1=1$. By $\R$-linearity this implies $\R\subset C$.
  \[
  \bar 1=1\cdot \bar 1\ \implies\
  1=\overline{1\cdot \bar 1}=1  \cdot\bar 1=\bar 1
  \]
  By $\R$-linearity of the antiinvolution this yields
  $\forall x\in\R:x=\bar x$.

  \item
    \[
    \overline{\left(\Tr(x)\right)}=\overline{x+\bar x}=\bar x+x=\Tr(x)
    \]
    and
    \[
    \overline{\left(\Nm(x)\right)}=
    \overline{\left(x\bar x\right)}=
    \overline{(\bar x)}\bar x=x\bar x=\Nm(x)
    \]
  \item
    $x+\bar x$ is central, hence $x(x+\bar x)=(x+\bar x)x$ which
    implies $x^2+x\bar x=x^2+\bar xx$ and consequently $x\bar x=\bar xx$.
  \item
    $\Nm(\bar x)=(\bar x)\overline{(\bar x)}=(\bar x)x=x(\bar x)=\Nm(x)$.
  \item
    If $A$ is associative we argue as follows:
    \begin{align*}
        \Nm(xy)&=(xy)\overline{(xy)}=xy(\bar y\bar x)\\
        &=x(y\bar y)\bar x=x\bar x(y\bar y)\\
        &=\Nm(x)\Nm(y)
    \end{align*}
    For the general case we observe that $x,\bar x,y,\bar y$
    are all contained in the $C$-algebra $A_0$ generated by $x$ and $y$
    (note that $x+\bar x,y+\bar y\in C$). Artin's theorem
    (\cite{Schafer}, Theorem~3.1) implies that
    $A_0$ is associative. Thus all the calculations in
    the above sequence of equations
    take place {\em within} an associative algebra, namely
    $A_0$ and the proof is therefore still valid, even if $A$ itself is
    not associative, but only alternative.
  \end{enumerate}
\end{proof}

\subsection{Relation with notions in linear algebra
  and number theory}

The norm and trace as considered here for the algebra of
quaternions are closely related to notions
in linear algebra and number theory.

The algebra $\H$ is a {\em central simple $\R$}-algebra
with $\H\tensor_\R\C\cong Mat(2\times 2,\C)$.

In the theory of central simple algebras (see e.g.~\cite{LF},
Chapter 29), one considers {\em reduced traces $Trd$} and {\em reduced
norms $Nrd$} defined as $Tr(\phi(x))$ resp.~$\det(\phi(x))$
for $x\in\H$, where $\phi$ denotes the embedding of $\H$
into $Mat(2\times 2,\C)$ via
the natural embedding $\H\subset\H\tensor_\R\C$ composed with
an isomorphism $\H\tensor_\R\C\cong Mat(2\times 2,\C)$.

There is also a  connection with notions of
  norm and trace in algebraic number theory:

If $A=\C$ and $x\mapsto x^c$ is complex conjugation,
then $\Tr $ and $\Nm$ defined as here agree with the
number-theoretical notions $\Tr $ and $\Nm$
for the Galois field extension $\C/\R$.

\subsection{Stem functions}

A  {\em stem function} is a (usually holomorphic) function $F$
defined on a {\em symmetric domain}%
\footnote{$D\subset\C$ is called {\em symmetric}
    iff $z\in D\iff \bar z\in D$.}
$D$ in $\C$
with values in 
$\Ac=A\tensor_\R\C$ satisfying $\overline{F(z)}=F(\bar z)$ where
we use the {\em complex conjugation} on $A\tensor_\R\C$.

For a stem function $F$ we define $(F^c)(z)=\left(F(z)\right)^c$,
i.e., we apply quaternionic conjugation pointwise.
Thus we obtain a {\em conjugation} on the algebra of stem functions
defined on a domain
$D\subset \C$. As before, we define norm and trace and obtain:
\[
\Nm(F)(z)=(FF^c)(z)=(F(z))(F^c(z))=(F(z))(F(z))^c=\Nm(F(z))
\]
and
\[
(\Tr  F)(z)=(F+F^c)(z)=F(z)+(F(z))^c=\Tr (F(z)).
\]

Globally defined slice regular functions are given as globally
convergent power series:
\[
f(q)=\sum_{k=0}^{+\infty} q^k a_k
\]
In this case
\[
(f^c)(q)=\sum_{k=0}^{+\infty} q^k a_k^c
\]

The space of slice regular functions on an axially
symmetric domain forms an associative $\R$-algebra with the
$*$-product as multiplication.

Hence $(\Tr f)=f+f^c$ and $\Nm(f)=f*f^c$.

Immediately from the construction we obtain:

\begin{proposition}\label{n-tr-stem}
  Let $f$ be a slice function and $F$ its associated stem function.

  Then 
  $\Nm(F)$, $\Tr(F)$ and $F^ c$ are the stem functions associated to
  $\Nm(f)$, $\Tr(f)$ and ~$f^c$.
\end{proposition}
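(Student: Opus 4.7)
The plan is to leverage the fact that the stem-function correspondence $F\leftrightarrow f$ is an $\R$-algebra isomorphism between stem functions on $\dom$ (equipped with pointwise multiplication on $A_\C$) and slice regular functions on $\Omega_\dom$ (equipped with the $*$-product). Given this, it suffices to verify compatibility with conjugation: once the assertion $F^c\leftrightarrow f^c$ is established, the claims about $\Tr$ and $\Nm$ follow formally, since $\Tr(f)=f+f^c$ (so $\R$-linearity of the correspondence gives its stem function as $F+F^c=\Tr(F)$) and $\Nm(f)=f*f^c$ (so multiplicativity gives its stem function as $F\cdot F^c=\Nm(F)$).

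For the core step, I would first verify that $F^c$ is actually a stem function, i.e.\ that it satisfies $\overline{F^c(z)}=F^c(\bar z)$. Here the outer bar denotes complex conjugation on $A_\C$ (acting on the $\C$-factor), while the $c$ denotes quaternionic conjugation on $A_\C$ (acting on the $A$-factor). Since these two conjugations commute on $A_\C=A\tensor_\R\C$ (they act on different tensor slots), the symmetry for $F$ transfers to $F^c$.

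Next, I would identify the lift of $F^c$ with $f^c$. In the globally convergent setting this is the direct calculation exhibited in the excerpt: $F(z)=\sum z^k a_k$ gives $F^c(z)=\sum z^k \bar a_k$, and applying $\phi_I$ yields $\sum q^k \bar a_k = f^c(q)$. In general one argues slicewise: for fixed $I\in\S$ the map $\phi_I:A_\C\to A$ intertwines the $c$-conjugation $a+\iota b\mapsto \bar a+\iota \bar b$ on $A_\C$ with the map $a+Ib\mapsto \bar a+I\bar b$ on $\H$, and this latter operation is precisely what sends the restriction $f|_{\C_I}$ to $f^c|_{\C_I}$.

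The argument is essentially bookkeeping: the only point requiring real care is to keep the two conjugations on $A_\C$ straight (the one coming from $A$, used in defining $F^c$, versus the one coming from $\C$, used in the stem-function symmetry) and to check that $\phi_I$ interacts with the former in the correct way. Once this is verified, there is no substantive obstacle, and the proposition is a direct consequence of the algebraic structure of the stem-function correspondence.
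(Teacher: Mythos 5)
Your proposal is correct and matches the paper's approach: the paper states the proposition follows ``immediately from the construction,'' and your argument simply makes explicit what that construction gives --- the correspondence $F\leftrightarrow f$ is an $\R$-algebra isomorphism compatible with conjugation, so the claims for $\Tr$ and $\Nm$ follow formally once $F^c\leftrightarrow f^c$ is checked. Your care in distinguishing the two conjugations on $A\tensor_\R\C$ and verifying that $F^c$ remains a stem function is exactly the right bookkeeping.
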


{\em Remark.} In general we
  have $f^c(q)\ne\overline{f(q)}$.
Only for real points $q\in\R$ we have $f^c(q)=\overline{f(q)}$
and consequently $(\Nm f)(q)=\Nm(f(q))$ and
$(\Tr f)(q)=\Tr\left(f(q)\right)$.

\subsection{Compatibility}

Recall that conjugation for slice regular functions is {\em not}
just pointwise conjugation of the function values.

Therefore in general
\[
(\Tr f)(q)\ne \Tr(f(q)), (\Nm f)(q)\ne \Nm(f(q)).
\]

Let $B$ be a $\R$-sub algebra of an $\R$ algebra $A$
equipped
with an antiinvolution which preserves $B$.

Then for $x\in B$ the notions $\Tr(x)$ and $\Nm(x)$
defined with respect to this antiinvolution are the same
regardless whether we regard $x$ as an element of $B$ or as an
element of $A$.

As a consequence
\begin{itemize}
\item
  For $x\in\H$ the notions $\Nm(x)$, $\Tr(x)$ agree
  independent of whether
  we consider $x$ in $\H$ or in $\Hc$.
\item
  For an element  $x\in\H$ the notions $\Nm(x)$, $\Tr(x)$ agree
  whether we regard $x$ in $\H$ or as a constant slice regular function
  with value $x$.
\end{itemize}

\subsection{Other notions}

For $x\in\H$ the term
$\frac12\Tr(x)$ is often called {\em real part} of $x$, sometimes
denoted as $x_0$.

$N(x)$ is also called the {\em symmetrization} of $x$ and denoted as
$x^s$.

\section{An example}

\begin{warning}
  The conditions of the Main Theorem \ref{mainth}
  do {\em not} imply that for any $q\in \H$ we can find
an element $ \alpha\in Aut(\H)$ such that $f(q)=\alpha(g(q))$.
\end{warning}

\begin{example}
  Let $f(q)=I$ and let $g(q)=\cos(q)I+\sin(q)J$, i.e.,
  \[
  g(q)= \sum_{k=0}^{+\infty}  \left(  q^{2k}\frac{(-1)^k}{(2k)!}I
  +q^{2k+1} \frac{(-1)^k}{(2k+1)!}J
  \right).
  \]
where $I,J$ are any two orthogonal imaginary units in $\mathbb{H}$ with $K=IJ.$ \\
  We recall the classical identities
  \[
  \sin(it)=i\sinh(t),\quad \cos(it)=\cosh(t)\ \forall t\in\R.
  \]
  For $t\in\R$ we deduce
  \begin{align*}
  g(tJ)&=\cos(tJ)I+\sin(tJ)J\\
  &  =\cosh(t)I +\sinh(t)\underbrace{J^2}_{=-1}\\
  & =\cosh(t)I-\sinh(t).\\
  \end{align*}
  In particular,
  the ``real part''  $\frac 12\Tr\left(g(tJ)\right)$
  is non-zero if $\sinh(t)\ne 0$,
  i.e., if $t \ne 0$.
  Hence for $t\in\R^ *$,
  we have
  \[
  \Tr\left(g(tJ)\right)\ne 0 = \Tr(I)=\Tr\left(f(tJ)\right)
  \]
  Since $\Tr(\phi(q))=\Tr(q)$ for every $q\in\H$ and
  every automorphism $\phi$ of $\H$, it follows
  that there is 
  no automorphism of $\H$ mapping $g(tJ)$ to $f(tJ)=I$.

  On the other hand
  \begin{align*}
  g^c(q)&=-\cos(q)I-\sin(q)J\\
&  \quad\implies\quad\\
  (g*g^c)(q)&=-\left ( \cos(q)I+\sin(q)J \right) *
  \left ( \cos(q)I+\sin(q)J \right)\\
  &=
  -\left( \cos^2(q)\underbrace{I^2}_{=-1}+\sin(q)^2 \underbrace{J^2}_{=-1}+
  \sin(q)\cos(q)\underbrace{(IJ+JI)}_{=0}\right)\\
  &=
  \left(\underbrace{\cos^2(q)+\sin^2(q) }_{=1} \right)=1=f*f^c\\
  \end{align*}
  and
  \begin{align*}
  g(q)+g^c(q)&=\underbrace{\cos(q)I+\sin(q)J}_{g(q)}
+  \underbrace{-\cos(q)I-\sin(q)J}_{g^ c(q)}
=0\\
&=I+(-I)=
f(q)+f^c(q)\quad , \forall q\in\H. \\
  \end{align*}

  Thus $\Tr(f)=\Tr(g)=0$ and $\Nm(f)=\Nm(g)=1$.
    The stem functions
  $F,G:\C\to\H\tensor\C$ associated to $f$ and $g$ are:
  \[
  F(z)=1\tensor I,\quad G(z)=\cos(z)\tensor I+\sin(z)\tensor J
  \]
  Since $sin^2(z)+\cos^ 2(z)=1\ \forall z\in\C$,
  both $F$ and $G$ avoid the center $\R\tensor\C$ of $\H\tensor\C$.
  Hence the {\em central divisors} $\cdiv(F)$ and $\cdiv(G)$
  (as defined in \paragraph\ref{ss-cdiv}) are both empty.
  Therefore
  we have verified that in this example
  \[
  \Tr(f)=\Tr(g), \quad \Nm(f)=\Nm(g),\quad \cdiv(f)=\cdiv(g),
  \]
  but for some $q\in\H$ there is no $\phi\in\Aut(\H)$ with
  \[
  f(q)=\phi(g(q)).
  \]

On the other hand, our main result implies that there exists
a holomorphic map $\phi:\C\to\Aut(\Hc)$ such that the
corresponding stem functions $F$ and $G$ satisfy
$F(z)=\phi(z)(G(z))$.
  
\cite{AD1} and \cite{AD2} pointed out that there exists a
slice regular function $h$  such that $f*h=h*g$.
This implies that
\[
h^{-*}*f*h=g
\]
where $h$ is slice regular, but $h^ {-*}$ is possibly only semi-regular.

In contrast, our result implies the existence of such a {\em slice regular}
function $h$ which is {\em invertible} in the sense that $h^ {-*}$ is
likewise slice regular, and not only semi-regular.

To give an explicit example, let
\[
H(z)=\cos(z/2)-K\sin(z/2)
\]
Then
\begin{align*}
&H(z)^ {-1}\cdot F(z)\cdot H(z)\\
  =&
\left(\cos(\frac z2)+K\sin(\frac z2)\right)\cdot
I\cdot
\left(\cos(\frac z2)-K\sin(\frac z2)\right)\\
=& I\cos z+J\sin z=G(z)\\
\end{align*}
  \end{example}
\section{Another example}\label{an-ex}
Consider the stem functions
\begin{align*}
  F(z)&=I+zJ+\frac 12z^2K\\
  G(z)&=\left(1+\frac 12z^2\right)I
\end{align*}

We have
\begin{align*}
  &\Tr(F)=\Tr(G)=0\\
  &\Nm(F)=\Nm(G)=1+z^2+\frac 14z^4\\
  &\cdiv(F)=\{\},\quad  \cdiv(G)=1\{\sqrt2 i\}+1\{-\sqrt2 i\}\\
\end{align*}

$F$ and $G$ are not equivalent in our sense because $\cdiv(F)\ne\cdiv(G)$.
Therefore there does not exist a
stem function $H:\C\to\Hc^*$ such
that both $H$ and $H^ {-1}$ are holomorphic and
\[
F=H^{-1}\cdot G \cdot H
\]

In contrast, Altavilla and de Fabritiis do not need  a condition
on $\cdiv$ for their results in \cite{AD1} and \cite{AD2}.
Hence their results imply that there exists a {\em meromorphic}
stem function $H$ with
\[
F=H^{-1}\cdot G \cdot H
\]

Indeed, an explicit calculation shows that
\[
H(z)=I\left(2+\frac12z^2\right)+Jz+\frac12z^2K,\quad
H^{-1}(z)=-\frac{I\left(2+\frac12z^2\right)+Jz+\frac12z^2K}%
{\frac 12z^4+3z^2+4}
\]
is such a function.
\section{More preparations}

\subsection{The Clifford algebra $\R_3$}

The {\em Clifford algebra} $\R_3$ may be realized as the
associative $\R$-algebra generated by $e_1,e_2,e_3$ with the relations
\[
e_{jk}+e_{kj}=-2\delta_{jk} \quad j,k\in\{1,2,3\}
\]
where $\delta_{jk}$ is the Kronecker symbol and $e_{jk} =e_j \cdot e_k$, i.e.,
\[
\delta_{jk}=\begin{cases} 1 & \text{ if $j=k$}\\
0 & \text{ if $j\ne k$}\\
\end{cases}
\]

It contains the idempotents $\omega_+,\omega_-$ satisfying
$\omega_+\omega_-=0$, namely:
\[
\omega_+=\frac 12\left(e_1e_2e_3+1\right),\quad
\omega_-=\frac 12·\left(e_1e_2e_3-1\right)
\]

This yields a direct sum decomposition of $\R_3$:
\[
\R_3=\omega_+\H\oplus\omega_-\H
\]
where $\H\cong \R_2$ is embedded in $\R_3$
as the subalgebra generated by $e_1,e_2$.

See  e.g.~\cite{Cliff},\cite{Lam}, Chapter V
for more details on Clifford algebras.

\subsection{Invariants for $\R_3$}

For more clarity in this paragraph we use $\Nm_A$ resp.~$\Tr_A$ in order
to denote the norm and trace for a given algebra $A$.

The Clifford algeba $\R_3$ is isomorphic
(as $\R$-algebra with an anti-involution which we call conjugation)
to $\H\oplus\H$.

As a consequence we have

\begin{align*}
\Nm_{\R_3}(q_1,q_2)&=\Nm_{\H\oplus\H}(q_1,q_2)&=\left(\Nm_\H(q_1),\Nm_\H(q_2)\right)\\
\quad
\Tr_{\R_3}(q_1,q_2)&=\Tr_{\H\oplus\H}(q_1,q_2)&=\left(\Tr_\H(q_1),\Tr_\H(q_2)\right)\\
\end{align*}
for $q_1,q_2\in\H\oplus\H\cong\R_3$.

Similar
for the complexified algebra $\R_3\tensor_\R\C\cong\Hc\oplus\Hc$
and the corresponding function algebras.

{\em Caveat:} Since $(z,w)\mapsto (w,z)$ is an automorphism of
$\H\oplus\H$ (see \paragraph\ref{sect-aut-r3}),
these functions $\Nm$, $\Tr$ are not invariants.
They are only invariants up to changing the order of the
two components, i.e., they are invariants for the
connected component $\Aut^0(\H\oplus\H)$ which equals
$\Aut(\H)\times\Aut(\H)$ acting component-wise on $A=\H\oplus\H$.

\subsection{Slice preserving functions}

\begin{proposition}
  Let $A=\H$ and let
  $f:A\to A$ be a slice regular function with
  stem function $F:\C\to\Ac$.

  Then the following are equivalent:
  \begin{enumerate}
  \item
    $f=f^c$,
  \item
    $F=F^c$,
  \item
    $F(\C)\subset  \R\tensor_\R\C\subset A\tensor_\R\C=\Ac$.
  \item
    $f(\C_I)\subset\C_I$ for all $I\in\S=\{q\in A:q^2=-1\}$
    (with $\C_I=\R+I\R$).
  \end{enumerate}
\end{proposition}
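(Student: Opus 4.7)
The plan is to prove the cycle $(i)\iff(ii)\iff(iii)$ and then $(iii)\iff(iv)$, exploiting the stem-function machinery for the first two equivalences and the axial formula $f(x+Iy)=\phi_I(F(x+iy))$ for the last.

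The equivalence $(i)\iff(ii)$ is immediate from Proposition~\ref{n-tr-stem}: the bijection $f\leftrightarrow F$ between slice regular functions and stem functions sends $f^c$ to $F^c$, so $f=f^c$ iff $F=F^c$. For $(ii)\iff(iii)$, observe that $F=F^c$ says $F(z)=(F(z))^c$ for every $z\in\dom$, i.e.\ that $F$ takes values in the fixed locus of quaternionic conjugation on $\Ac=\H\tensor_\R\C$. Since this conjugation acts by $q\tensor w\mapsto\bar q\tensor w$ (complex-linear in the $\C$ factor) and the fixed set of $q\mapsto\bar q$ in $\H$ equals $\R$, this fixed locus is exactly $\R\tensor_\R\C$.

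For $(iii)\Rightarrow(iv)$: writing $F(x+iy)=a+\iota b$ with $a,b\in\H$, the axial formula gives $f(x+Iy)=a+Ib$, so if $a,b\in\R$ then $a+Ib\in\R+\R I=\C_I$ for every $I\in\S$.

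The one nontrivial step is $(iv)\Rightarrow(iii)$. Fix $z\in\dom$, write $F(z)=a+\iota b$, and assume $a+Ib\in\C_I$ for every $I\in\S$. Since $\C_I$ is the centralizer of $I$ in $\H$, this is the commutator identity
\[
[a+Ib,I]=0,\qquad\text{i.e.}\qquad aI-Ia=-b-IbI.
\]
The key trick is to apply the same identity with $-I\in\S$ in place of $I$: since $(-I)b(-I)=IbI$ the right-hand side is unchanged, while the left-hand side reverses sign. Adding and subtracting the two resulting relations gives simultaneously
\[
aI=Ia\qquad\text{and}\qquad IbI=-b,
\]
and the second equation is equivalent to $Ib=bI$ after multiplying on the right by $I^{-1}=-I$. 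Hence both $a$ and $b$ commute with every element of $\S$, and in particular with two non-collinear imaginary units $i$ and $j$, forcing $a,b\in\C_i\cap\C_j=\R$. Therefore $F(z)\in\R\tensor_\R\C$, establishing $(iii)$. The argument is essentially formal; the only real obstacle is this last implication, and it is cleanly resolved by symmetrizing the commutator relation in $I$ versus $-I$, which decouples the contributions of $a$ and $b$.
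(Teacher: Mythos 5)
Your proof is correct and follows the same route the paper indicates: $(i)\iff(ii)\iff(iii)$ from the construction of $(\ )^c$ and the stem-function correspondence, and $(iii)\iff(iv)$ from the representation formula. The paper merely cites these facts as well known, whereas you supply the details — in particular the clean $I$ versus $-I$ symmetrization that decouples $a$ and $b$ in the step $(iv)\Rightarrow(iii)$ — and all of those details check out.
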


\begin{definition}\label{slice-preserv}
If one (hence all) of these properties are fulfilled,
$f$ is called ``{\em slice preserving}''.
\end{definition}

\begin{proof}
  These equivalences are well known. $(iv)\iff(iii)$ follows from
  representation formula.
  $(i)\iff(ii)\iff(iii)$ by construction  of $(\ )^c$.
\end{proof}

\subsection{Slice regular functions for $A=\R_3$}

In \cite{GP} {\em slice regular functions} are considered
for real alternative algebras, e.g., Clifford algebras.
For generalities on Clifford algebras see e.g. \cite{Lam}, Ch.V.

A {\em quadratic cone}
$Q_A\subset A$ is defined
such that every $q\in Q_A$ is contained in the image of some
$\R$-algebra homomorphism from $\C$ to $A$.
While $Q_A=A$ for $A=\H$, we have $Q_A\ne A$ for
the Clifford algebra $A=\R_3\cong\H\oplus\H$.
More precisely (see \cite{GP}):
\begin{align*}
  Q_A  & = \R\cup\left\{x\in A: \Tr(x),\Nm(x)\in\R,4\Nm(x)>\Tr(x)^ 2\right\}
  \\
  & =  \left\{x\in A: \Tr(x),\Nm(x)\in\R\right\}
  %\footnote{For $A=\R_3$ the condition $4\Nm(x)>\Tr(x)^ 2$ is automatically
  %  satisfied.}
  \\
  &\cong\{(q_1,q_2)\in\H\oplus\H: \Tr_\H(q_1)=\Tr_\H(q_2),
  \Nm_\H(q_1)=\Nm_\H(q_2)\}
\end{align*}

(For $A=\R_3$ the condition $4\Nm(x)>\Tr(x)^ 2$ is automatically
satisfied for every element $x\in A\setminus\R$ with
$\Nm(x),\Tr(x)\in\R$.)

Given a symmetric domain $D\subset\C$, there is
the {``\em circularization''}
\[
\Omega_{\dom}=\{(x+yH_1,x+yH_2):x,y\in\R,H_1,H_2\in\S_{\H}, x+yi\in D\}.
\]
(with $\S_\H=\{J\in\H: J^2=-1\}$.)
Evidently $\Omega_D\subset Q_A$.

But due to the special direct sum nature of $\R_3$ we
may 
regard a larger set, namely :
\begin{equation}\label{def-omega_d}
W_D=\{(x_1+y_1H_1,x_2+y_2H_2):H_j\in\S_\H, x_j,y_j\in\R,
x_j+y_ji\in D\ \forall j\in\{1,2\}\}
\end{equation}

Note that $\Omega_D=Q_A\cap W_D$. Thus $\Omega_D$ relates to
  $W_D$ like the quadratic cone $Q_A$ to the whole algebra $A=\R_3$.

{\em Slice regular functions} as defined in \cite{GP}
are certain
functions defined on the
cone $Q_A$.
It is shown in \cite{GP} that these {\em slice regular functions}
correspond to {\em holomorphic stem functions}, i.e.,
holomorphic functions $F:\dom\to\Ac$
such that $\overline{F(z)}=F(\bar z)\ \forall z\in\C$.

We remark that there is a natural way
to extend a function
$f:\Omega_D\to \R_3$ to a function $\tilde f:W_D\to \R_3$.

Namely, we use the decomposition $\R_3\cong\H\oplus\H$
to represent $f$ as $f=(f_1,f_2)$ with $f_i:\Omega_D\to\H$
and define
\begin{align*}
&\tilde f(x_1+y_1H_1,x_2+y_2H_2)\\
\dfe
&\left(
f_1(x_1+y_1H_1,x_1+y_1H_2),
f_2(x_2+y_2H_1,x_2+y_2H_2)
\right)\\
\end{align*}

In this way the algebra of slice regular functions on $\Omega_D$
can be identified with a certain algebra of functions on $W_D$.

Let us now discuss the special case of globally defined functions
(i.e.~$D=\C$, $\Omega_D=Q_A$ and $W_D=A=\R_3$.)

In this case stem functions are
holomorphic functions with values in $\Ac$
which are defined on the whole of $\C$. Such a holomorphic
function may be defined by a  convergent power series
$\sum_{k=0}^{+\infty} z^ka_k$ with $a_k\in A$.
(A priori, $a_k\in \Ac$ for a holomorphic function
$F:\C\to\Ac$, but the condition
$\overline{F(z)}=F(\bar z)$ ensures that $a_k\in A$.)

Such a power series may also be regarded as power series
in a variable in $A$; in this case it defines a function
$f:A\to A$.
(It is easily shown that it is again globally convergent.) 

Thus there are bijective correspondances
between the following three classes of functions:

\begin{itemize}
  \item
    {\em Slice regular functions} $f:Q_A\to A$ as defined in \cite{GP}.
  \item
    {\em Entire functions}, i.e., functions
    $f:A\to A$ which are defined by globally convergent power series
    $\sum_{k=0}^{+\infty} q^k a_k$ ($a_k\in A$).
  \item
    {\em Stem functions}, i.e., holomorphic functions $F:\C\to \Ac$ with
    $\overline{F(z)}=F(\bar z)$.
\end{itemize}

In particular, slice regular functions $f:Q_A\to A$ extend naturally
to functions $f:A\to A$ defined by globally convergent power series.

  Thus for the special case of the Clifford algebra $\R_3$ there is no
  need to restrict the domain of definition to (subdomains of) the
  quadratic cone $Q_A$.

  Let $D,\Omega_D,W_D$ be as in the preceding subsection.

  We described above how to associate a function $\tilde f:W_D\to A\cong\H\oplus\H$
  to a given function $f:\Omega_D\to A$.

\section{Central divisors}
 \subsection{Divisors for vector-valued functions}

Normally, {\em divisors} are defined for holomorphic 
functions with values in $\C$. Here we extend this notion to holomorphic
maps from Riemann surfaces to higher-dimensional complex vector spaces.

\begin{definition}\label{def-vd}
  Let $f:X\to V=\C^n$ be a holomorphic map from a Riemann surface $X$
  to a complex vector space $V=\C^n$.
  Assume that $f$ is not identically zero.

  The {\em divisor} of $f$ is the divisor corresponding to the
  pull back of the ideal sheaf of the origin, i.e., for $f=(f_1,\ldots,f_n)$,
  $f_i:X\to\C$ we have $div(f)=\sum_{p\in X} m_p\{p\}$
  where $m_p$ denotes the minimum of the multiplicities $mult_p(f_i)$.
\end{definition}
\subsection{Central divisor for stem functions}\label{ss-cdiv}

In \cite{BW1}, Definition~3.1, we introduced the notion of a
{\em slice divisor}.
Here we will need a different notion of divisor.

Namely, we need a notion of divisor which measures how far the
function is from being {\em slice-preserving}.
This we call {``\em central divisor''}.
Here we define it for the quaternion case.

\begin{definition}\label{def-cd}
  Let $Z\cong\C$ be the center of $\Hc$.
  $\dom\subset\C$ be a symmetric domain, $F:\dom\to\Ac$ a holomorphic
  map. Assume $F(\dom)\not\subset Z$.

  The {\em central divisor} $\cdiv(F)$ is defined as the
  divisor
  (in the sense of Definition~\ref{def-vd})
  of the map from $\dom$ to $\Hc/Z$ induced by $F$.
\end{definition}

Let $(1,i,j,k)$ be the standard basis of $\H$ and
let $\WC$ denote the complex vector subspace
of $\Hc$ generated by $i\tensor 1$, $j\tensor 1$, $k\tensor 1$.

Then  $\Hc$ may be represented as the vector space direct sum
$\Hc=Z\oplus\WC$ where $Z\cong\C$ denotes the center.
and we can decompose $F:\dom \to\Hc$ as $F=(F',F''):\dom\to Z\times(\WC)$
and the central divisor $\cdiv(F)$ equals
$\sum_{p\in\dom} n_p\{p\}$ where $n_p$ denotes the vanishing order of $F''$
at $p$.

Let $F=(F',F'')$ be a stem function for a slice function $f$.
Then
$f$ is {\em slice-preserving}
if $F''\equiv 0$ (Definition~\ref{slice-preserv}).

Hence the assumption that $f$ is not slice preserving implies
that $F''$ does not vanish identically and we therefore may define
$\cdiv(F)$ as above.

\begin{example}
  Consider $F:\C\to\H\tensor_\R\C$ defined as
  \[
  F(z)=1\tensor z + i\tensor z^2(z-1) + j\tensor z^3(z-1)^2
  \]

  Then
  \[
  \cdiv(F)=
  2\cdot\{0\}+1\cdot \{1\}
  \]
\end{example}

{\em Caveat:} These central divisors do {\em not} satisfy the
usual functoriality:
\begin{example}
  Let
  \[
  F(z)=1+iz,\quad G(z)=1+j(1+z)
  \]
  Then $\cdiv(F)=1\cdot\{0\}$ and $\cdiv(G)=1\cdot\{-1\}$, but
  $\cdiv(FG)=\cdiv\left(1+iz+j(z+1)+k(z+1)z\right)$ is empty.
  Thus
  \[
  \cdiv(FG)\ne\cdiv(F)+\cdiv(G).
  \]
\end{example}

\subsection{Central divisor for slice functions}

We simply define the {\em central divisor} for a given
slice regular function as the central divisor
of the corresponding stem function
(as defined in Definition~\ref{def-cd}).

\subsection{Central divisors for $\R_3$} 
The {\em central divisor} $\cdiv$
of a slice regular function $f:\R_3\to\R_3$
is defined via the stem function $F$. Given a stem function
$F:\dom\to\Ac\cong\Hc\oplus\Hc$, we define %\notiz{Cq}
\[
\cdiv(F)=(\cdiv F_1,\cdiv F_2)
\]
where $F_1,F_2$ are the components of the stem function
$F$ with  respect to the decomposition of $\R_3$ into a
direct sum of two copies of $\H$.

What we need, is to treat $\R_3$ consequently as product,
i.e., we regard slice regular functions from (a domain in)
$\R_3$ to $\R_3$ as a pair of the quaternionic slice regular functions
and {\em define} the {``central divisor''} of a slice regular
function $f=(f_1;f_2)$ as
\[
\cdiv(f)=\cdiv(f_1;f_2)\dfe\left(\cdiv(f_1),\cdiv(f_2)\right)
\]

\section{Strategy}
Let $A$ be a $\R$-algebra,
$\Ac=A\tensor_\R \C$ its complexification and
$\GC$ the automorphism group of the complex algebra $\Ac$.

For certain%
\footnote{``Stem functions''}
 holomorphic maps $F,H:\C\to\Ac$
we want to show the following statement:

{\em If for every $x\in\C$ there exists an element $g\in\GC$ such that
  $F(x)=g\left(H(x)\right)$, then there exists a holomorphic map
  $\phi:\C\to\GC$ with $F(x)=\phi(x) \left(H(x)\right)\ \forall x$.
}

This amounts to find a section for a certain projection map, namely
$\pi:V\to\C$
with
\[
V=\{(x,g)\in\C\times\GC: F(x)=g\left(H(x)\right) \}
\]
and $\pi(x,g)=x$.

First we discuss the locus where $F$ and $H$ assume zero as value.

Outside this zero locus $D$, i.e.,
restricted
to $V_0=\pi^{-1}(\C\setminus D)$, the map $\pi$
has some nice properties if $A\cong\H$:
\begin{enumerate}
\item
  all $\pi$-fibers are of the same dimension,  homogeneous and moreover biholomorphic
  to a Lie subgroup of $\GC$.
\item
  There are everywhere local holomorphic sections.
\end{enumerate}

Moreover, for $A=\H$ there is a Zariski open subset $\Omega$
in $\C$
on which $\pi$ is a locally holomorphically trivial fiber bundle with
$\C^*$ as fiber. 

We show that there exists a global holomorphic section on $\Omega$.

We will use
strongly that the generic isotropy group
(i.e.~$\C^ *$) is both commutative and one-dimensional.

\section{Main Result for $\R_3$}

\begin{theorem}\label{thm-r3}
  Let $\dom\subset\C$ be a symmetric domain, $\Omega_{\dom}\subset \R_3$
  as defined
  above ( equation\eqref{def-omega_d} ), $f,h:\Omega_D\to \R_3$
  slice regular functions as defined above.
  
  Let $F,H:\dom\to \Ac$ be the associated
  {\em stem functions}.

Let $\GC$ denote the
  connected component of the neutral element of
  the group of $\C$-algebra automorphisms of
  $\Ac=\R_3\tensor_{\R}\C$, i.e.,
  $\GC=\Aut(\Hc)\times\Aut(\Hc)$.
  
  Let $\Nm$, $\Tr$ and $\cdiv$ be defined as in subsection 5.2. \\
  Then the following are equivalent:

  \begin{enumerate}[label=(\arabic*)]
    \item
    There exists a holomorphic map
    $\phi:\dom\to \GC$ with
     \[
     F(z)=\phi(z)\left( H(z)\right)\ \forall z\in\dom
     \]
   \item
    There exists a holomorphic map
    $\alpha:\dom\to \Ac^*\cong\Hc^*\times\Hc^*$ with
     \[
     F(z)=\alpha(z)\left( H(z)\right)\left(\alpha(z)\right)^{-1}\
     \forall z\in\dom
     \]
   \item
      $\cdiv(F)=\cdiv(H)$, $\Tr (F)=\Tr (H)$ and $\Nm(F)=\Nm(H)$.
  \end{enumerate}

\end{theorem}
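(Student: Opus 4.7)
The plan is to reduce Theorem~\ref{thm-r3} to the quaternionic Theorem~\ref{mainth} by exploiting the direct sum decomposition $\R_3 \cong \H \oplus \H$ and the induced decomposition $\Ac \cong \Hc \oplus \Hc$. First I would write $F = (F_1, F_2)$ and $H = (H_1, H_2)$ with stem functions $F_i, H_i : \dom \to \Hc$. Under the assumption $\GC = \Aut(\Hc) \times \Aut(\Hc)$, a holomorphic map $\phi : \dom \to \GC$ is a pair $\phi = (\phi_1, \phi_2)$; since $\Ac^* \cong \Hc^* \times \Hc^*$, a holomorphic $\alpha : \dom \to \Ac^*$ is similarly a pair $\alpha = (\alpha_1, \alpha_2)$. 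By the formulas in Subsections~5.2 and 7.4, the invariants $\Tr$, $\Nm$, and $\cdiv$ all decompose componentwise. Hence each of conditions (1), (2), (3) is equivalent to the conjunction of the corresponding conditions for the pairs $(F_1, H_1)$ and $(F_2, H_2)$ separately. Note that restricting to the connected component of $\Aut(\Ac)$ is essential: the swap automorphism of $\H \oplus \H$ would permit $\phi$ to exchange components, and then componentwise $\Tr$ and $\Nm$ would not be preserved.

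The reduction to the quaternionic case then proceeds by applying Theorem~\ref{mainth} to each pair $(F_i, H_i)$. Two subcases are immediate: if neither $F_i$ nor $H_i$ is slice preserving, invoke part (a); if both are slice preserving, invoke part (b), and observe that the equality $F_i = H_i$ is equivalent to the agreement of $\Tr$ (or of $\Nm$), since a slice-preserving stem function takes values in the center $Z \cong \C$ and satisfies $\Tr(F_i) = 2F_i$, while $\cdiv$ is vacuous. The remaining situation is the mixed subcase where one of $F_i, H_i$ is slice preserving and the other is not. Here all three conditions are in fact \emph{false} in parallel: since any element of $\Aut(\Hc)$ preserves the center $Z$ (and so does conjugation by a unit of $\Hc^*$), conditions (1) and (2) both force $F_i(z) \in Z$ if and only if $H_i(z) \in Z$, i.e., simultaneous slice preservation; and $\cdiv(F_i) = \cdiv(H_i)$ presupposes that $\cdiv$ is defined on both sides, which again forces simultaneous slice preservation. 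Thus in the mixed subcase all three conditions fail and the equivalence still holds trivially.

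The main obstacle, in my view, is not a deep calculation but rather the careful bookkeeping of the slice-preserving/non-slice-preserving dichotomy across the two components, together with the convention that the equation $\cdiv(F) = \cdiv(H)$ implicitly carries the requirement that both sides be defined. Once this convention is made explicit (so that the equality of central divisors encodes simultaneous slice preservation per component), the theorem follows by assembling the componentwise equivalences extracted from Theorem~\ref{mainth}. No new analytic input beyond the main quaternionic theorem is needed.
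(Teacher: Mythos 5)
Your proposal is correct and follows essentially the same route as the paper: the paper's own proof is a one-sentence reduction to Theorem~\ref{mainth} via the decompositions $\R_3\cong\H\oplus\H$ and $\GC\cong\Aut(\Hc)\times\Aut(\Hc)$, which is exactly your componentwise argument. Your additional bookkeeping (the slice-preserving dichotomy in each component, the mixed subcase where all three conditions fail, and the convention that $\cdiv(F)=\cdiv(H)$ presupposes both sides are defined) is a genuine refinement that the paper leaves implicit, and it is needed in particular because trace and norm equality alone do not exclude the mixed case when the non-central part is isotropic.
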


\begin{proof}%
Using $A\cong\H\oplus\H$ and $\GC^0\cong \Aut(\Hc)\times\Aut(\Hc)$,
  the equivalence of the existence of such a map $\phi:\dom\to\GC$
  with the condition
  \[
  \cdiv(F)=\cdiv(H), \Tr (F)=\Tr (H) \text{ and }\Nm(F)=\Nm(H)
  \]
  follows from the respective result for quaternions
  (Theorem~\ref{mainth}, proved in \paragraph\ref{pf-main}).
\end{proof}

\section{Local equivalence}

\begin{SpecAss}\label{SpecAss}
Let $G$ be a connected complex Lie group acting holomorphically on
  a complex manifold $X$ such that all the orbits have the same
  dimension $d$.

  Let
  \[
  F,H:\Delta=\{z\in\C:|z|<1\}
  \to X
  \]
  be holomorphic maps such that for every
  $z\in\Delta$ there exists an element $g\in G$ (depending on $z$,
  not necessarily unique)
  with $F(z)=g\cdot H(z)$.

  Let
  \[
  V=\{(z,g)\in\Delta\times G: F(z)=g\cdot H(z)\}
  \]
\end{SpecAss}

\begin{lemma}\label{fiber-iso}
  Under the above assumptions (\ref{SpecAss})
  for every $t\in\Delta$ there is
  a biholomorphic map between the fiber
  \[
  V_t=\{(z,g)\in V: z=t\}
  \]
  and the isotropy group
  \[
  G_{F(t)}=\{g\in G: g\cdot F(t)=F(t)\}
  \]
\end{lemma}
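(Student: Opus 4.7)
The plan is to pick a base point $g_0$ in the fiber $V_t$ and use right translation in $G$ to identify $V_t$ with the isotropy group $G_{F(t)}$; this is essentially the orbit--stabilizer correspondence, transported into the holomorphic category.

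First, by the hypothesis of \ref{SpecAss} applied at $z=t$, there exists some $g_0\in G$ with $F(t)=g_0\cdot H(t)$, and I would fix such a $g_0$. For any $g\in G$, the pair $(t,g)$ lies in $V_t$ iff $g\cdot H(t)=F(t)=g_0\cdot H(t)$, i.e., iff $g_0^{-1}g\in G_{H(t)}$. Using $g_0\cdot H(t)=F(t)$ one checks that $g_0\,G_{H(t)}\,g_0^{-1}=G_{F(t)}$, so the condition $g_0^{-1}g\in G_{H(t)}$ is equivalent to $gg_0^{-1}\in G_{F(t)}$. Consequently the projection of $V_t$ to $G$ is exactly the right coset $G_{F(t)}\cdot g_0$.

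I would then define $\Psi:G_{F(t)}\to V_t$ by $\Psi(h)=(t,hg_0)$. Right multiplication by $g_0$ is a biholomorphism of $G$, so $\Psi$ is holomorphic; it is bijective by the preceding paragraph; and its inverse $(t,g)\mapsto gg_0^{-1}$ is likewise holomorphic. Since $G_{F(t)}$, being the stabilizer of a point under a holomorphic action of a complex Lie group, is a closed complex Lie subgroup of $G$ and inherits a canonical complex-manifold structure, $\Psi$ is a biholomorphism of complex manifolds, as claimed.

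There is no serious obstacle here; the only real content is the coset bookkeeping required to land on $G_{F(t)}$ rather than $G_{H(t)}$. The hypothesis in \ref{SpecAss} that all orbits have the same dimension $d$ is not actually used in this particular lemma, and presumably enters only in the subsequent analysis of the global structure of $\pi:V\to\Delta$ (e.g.\ to ensure that $V$ is smooth of the expected dimension and that local holomorphic sections of $\pi$ exist).
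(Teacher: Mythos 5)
Your proof is correct and follows essentially the same route as the paper's: both fix a base point $g_0$ with $F(t)=g_0\cdot H(t)$ and identify $V_t$ with $G_{F(t)}$ via $h\mapsto (t,hg_0)$, checking bijectivity by the same coset bookkeeping. Your added observation that $g_0 G_{H(t)} g_0^{-1}=G_{F(t)}$ and your remark that the equidimensionality hypothesis is not used here are both accurate.
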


\begin{proof}
  Fix $t\in\Delta$ and choose a point $(t,g_0)\in V_t$.
  Note that
  \[
  F(t)=g_0\cdot H(t)
  \]
  because $(t,g_0)\in V_t\subset V$.

  We define a map
  \[
  \zeta:G_{F(t)}\stackrel{\sim}{\longrightarrow} V_t
  \]
  as
  \[
  \zeta(g)=(t,g\cdot g_0)
  \]

  We claim that this is a biholomorphic map from $G_{F(t)}$
  to $V_t$.
  First we verify that $\zeta(g)\in V_t$. Indeed,
  $F(t)=g_0\cdot H(t)$
  (as seen above) and $g\cdot F(t)=F(t)$, 
  because $g\in G_{F(t)}$.

  Combined these facts imply
  \[
  F(t)=g\cdot \underbrace{g_0\cdot H(t) }_{F(t)}
  \ \implies\ \zeta(g)=(t,gg_0)\in V
  \]
  $\zeta$ is  obviously injective. Let us check surjectivity.
  Let $(t,p)\in V_t$. Then $F(t)=p\cdot H(t)$.
  Recall that $F(t)=g_0\cdot H(t)$. It follows that
  \[
  (p\cdot g_0^{-1})F(t)=(p\cdot g_0^{-1})g_0\cdot H(t)=pH(t)=F(t)
  \]
  Hence
  \[
   p\cdot g_0^{-1}\in G_{F(t)}.
   \]
   We claim that $\zeta\left(p\cdot g_0^{-1}\right)=(t,p)$.
   Indeed
   \[
   \zeta\left(p\cdot g_0^{-1}\right)=
   \left(t,\left(p\cdot g_0^{-1}\right)\cdot g_0\right)
   =(t,p).
   \]
  
  Therefore 
  \[
  \zeta:G_{F(t)}\stackrel{\sim}{\longrightarrow} V_t
  \]
  is biholomorphic.
\end{proof}
  
\begin{lemma}\label{v-smooth}
Under the above assumptions  (\ref{SpecAss})
$V$ is smooth.
\end{lemma}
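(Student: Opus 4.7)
The strategy is to realize $V$ as the preimage of a smooth submanifold under a holomorphic submersion of constant rank. Define the auxiliary map
\[
\Phi : \Delta \times G \longrightarrow \Delta \times X, \qquad \Phi(z,g) = \bigl(z,\, g \cdot H(z)\bigr),
\]
and let $\Gamma_F = \{(z, F(z)) : z \in \Delta\}$ be the graph of $F$, a $1$-dimensional holomorphic submanifold of $\Delta \times X$. Tautologically $V = \Phi^{-1}(\Gamma_F)$, and the standing hypothesis that every $F(z)$ lies in the orbit $G \cdot H(z)$ guarantees that $\Gamma_F$ is contained in the image of $\Phi$.

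The main step is to verify that $\Phi$ has constant rank $1+d$ throughout $\Delta \times G$. At an arbitrary point $(z,g)$, identifying $T_g G$ with $\Lie(G)$ via right translation, the differential reads
\[
d\Phi_{(z,g)}(v, \xi) = \bigl(v,\; g_* dH_z(v) + \xi_{g\cdot H(z)}\bigr),
\]
where $\xi_{g \cdot H(z)}$ denotes the fundamental vector field associated to $\xi$ evaluated at $g \cdot H(z)$. Its kernel consists of pairs $(0, \xi)$ with $\xi \in \Lie(G_{g\cdot H(z)})$; since by hypothesis every $G$-orbit has dimension $d$, every isotropy Lie algebra has dimension $\dim G - d$. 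Therefore $\ker d\Phi$ has constant dimension $\dim G - d$ and $\Phi$ has constant rank $1+d$.

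By the holomorphic constant rank theorem, locally near any $(z_0, g_0) \in V$ the image of $\Phi$ is a $(1+d)$-dimensional holomorphic submanifold $Y$ of $\Delta \times X$, and $\Phi$ is a holomorphic submersion onto $Y$ with fibers of dimension $\dim G - d$. Since $\Gamma_F$ is a $1$-dimensional holomorphic submanifold of $\Delta \times X$ contained in $Y$ near $(z_0, F(z_0))$, it is a holomorphic codimension-$d$ submanifold of $Y$. The preimage of a submanifold under a holomorphic submersion is smooth, so $V = \Phi^{-1}(\Gamma_F)$ is a holomorphic submanifold of $\Delta \times G$ of the expected dimension $1 + \dim G - d$, matching the fiber-dimension count from Lemma~\ref{fiber-iso}. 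The principal technical hurdle is establishing the constant-rank property of $\Phi$; this depends crucially on the equidimensionality of all $G$-orbits, for without it $\dim \ker d\Phi$ could jump on loci where the orbit dimension drops, and $V$ could fail to be smooth there.
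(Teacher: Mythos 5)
Your proof is correct and follows essentially the same route as the paper's: there one sets $\phi(z,g)=(z,F(z),g\cdot H(z))$ and realizes $V$ as $\phi^{-1}(Z)$ for the diagonal-type submanifold $Z=\{(z,x,x)\}$, computes that $\ker D\phi_{(z,g)}$ is $\{0\}\times\Lie\bigl(G_{g\cdot H(z)}\bigr)$ and hence has constant dimension $\dim G-d$ by the equidimensionality of the orbits, and concludes smoothness from constancy of the rank --- your map $\Phi(z,g)=(z,g\cdot H(z))$ with the graph $\Gamma_F$ as target locus is merely a repackaging of that construction with the identical kernel computation. The one assertion you make that deserves a word of justification is that $\Gamma_F$ lies in the \emph{local} image $Y=\Phi(U)$ near $(z_0,F(z_0))$ (the standing hypothesis only puts $\Gamma_F$ in the global image $\Phi(\Delta\times G)$); the paper's corresponding final step implicitly needs the same kind of statement (that $Z\cap\phi(U)$ is a submanifold of the local image), so your write-up is at the same level of rigor as the source and makes the delicate point more visible rather than less.
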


\begin{proof}
    Consider
    \[
    Z:=\left\{ (z,x,x):z\in\Delta,x\in X\right\}.
    \]
    Note that $Z$ is a submanifold of $\Delta\times X\times X$.

    Define
    \[
    \phi:\Delta\times G\to \Delta\times X\times X,
    \quad \phi(z,g)=\left(z,F(z),g\cdot H(z)\right)
    \]
    and observe that $V=\phi^{-1}(Z)$.

    In order to verify the smoothness of $V$ it suffices
    to show that $D\phi$ has everywhere the same rank.
    
    Let $(x,g)\in \Delta\times G$ and consider
    \[
    D\phi_{(x,g)}:T_{(x,g)}\left(\Delta\times G\right)
    \to T_{\phi(x,g)}\left( \Delta\times X\times X\right).
    \]
    We observe that
    \[
    T_{(x,g)}\left(\Delta\times G\right)\cong \left(T_x\Delta\right)\times Lie(G).
    \]
    From $\phi(z,g)=(z,F(z),g(H(z)))$ we infer that
    \begin{align*}
    &(v,w)\in\ker D\phi_{(x,g)}\subset
      \left(T_x\Delta\right)\times Lie(G)\\
      \iff &
      v=0\text{ and }w\in\ker D\zeta\ \text{ for }
      \zeta:g\mapsto g\left(H(z)\right)
      \\
    \end{align*}
    Here $\zeta:G\to X$ is the orbit map $g\mapsto g\left(H(z)\right)$.
    Standard theory of transformation groups implies
    that $w\in\ker D\zeta$ iff $w\in T_gG\cong\Lie(G)$
    is contained in the Lie subalgebra of the isotropy group
    of the $G$-action at $g\left(H(z)\right)$.

    By assumption all the $G$-orbits in $X$ have the same dimension $d$,
    Hence every isotropy group is of dimension $\dim(G)-d$.
    It follows that
    \[
    \dim\ker D\phi_{z,g}=\dim(G)-d\ \ \forall z,g
    \]
    Thus $\phi$ is a map of constant rank and
    $V=\phi^{-1}(Z)$ is smooth.
    \end{proof}

\begin{proposition}\label{loc-eq}
 Under the above assumptions,
 there exists $0<r<1$ and a holomorphic map
  \[
  \phi:
  \Delta_r=\{z:|z|<r\}\to G
  \]
  such that
  \[
  F(z)=\phi(z)\left( H(z)\right)\ \forall z\in\Delta_r.
  \]
\end{proposition}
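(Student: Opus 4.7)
The plan is to show that the projection $\pi:V\to\Delta$, $(z,g)\mapsto z$, is a holomorphic submersion onto $\Delta$, and then extract a holomorphic section near $0$ via the holomorphic implicit function theorem. From Lemma~\ref{v-smooth} we already know $V$ is smooth, and Lemma~\ref{fiber-iso} identifies every fiber $V_t$ with the isotropy group $G_{F(t)}$, which has dimension $\dim G-d$ since every $G$-orbit in $X$ has dimension $d$. Thus all fibers of $\pi$ have the same dimension $\dim G-d$, and what remains is to check that the local dimension of $V$ is $1+\dim G-d$ at every point, so that $\pi$ is a submersion.

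To carry this out I would fix an arbitrary $(z_0,g_0)\in V$ and realize $T_{(z_0,g_0)}V$ as the kernel of the linearization
\[
L:T_{z_0}\Delta\oplus T_{g_0}G\longrightarrow T_{F(z_0)}X,\quad
(v,w)\mapsto DF_{z_0}(v)-g_0\cdot DH_{z_0}(v)-w\cdot H(z_0),
\]
where $w\cdot H(z_0)$ denotes the differential at $g_0$ of the orbit map $g\mapsto g\cdot H(z_0)$, whose image equals $T_{F(z_0)}(G\cdot F(z_0))$ and has dimension $d$. The crucial claim is that $DF_{z_0}(v)-g_0\cdot DH_{z_0}(v)$ already lies in this orbit-tangent subspace. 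Indeed, for every $z$ both $F(z)$ and $g_0\cdot H(z)$ lie in $G\cdot H(z)$: the first by hypothesis, the second tautologically. Since all $G$-orbits have the same dimension $d$, the orbits form a regular holomorphic foliation of $X$, so near $F(z_0)=g_0 H(z_0)$ there is a local transverse projection $\pi_{\mathrm{orb}}$ whose fibers are the orbits. Then $\pi_{\mathrm{orb}}\circ F=\pi_{\mathrm{orb}}\circ(g_0 H)$ on a neighborhood of $z_0$, and differentiating gives $DF(v)-g_0 DH(v)\in\ker D\pi_{\mathrm{orb}}=T_{F(z_0)}(G\cdot F(z_0))$. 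It follows that the image of $L$ equals $T_{F(z_0)}(G\cdot F(z_0))$, of dimension $d$, so $\dim T_{(z_0,g_0)}V=1+\dim G-d$ at every point and $\pi$ is a holomorphic submersion.

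To finish, since $\pi$ is surjective by hypothesis I would pick any $(0,g_0)\in V$ over $0$ and apply the holomorphic implicit function theorem to the submersion $\pi$; this yields some $r\in(0,1)$ and a holomorphic section $z\mapsto(z,\phi(z))$ on $\Delta_r$ with $\phi(0)=g_0$. The map $\phi:\Delta_r\to G$ then satisfies $F(z)=\phi(z)(H(z))$ on $\Delta_r$, which is the required conclusion. The main obstacle is the middle step: showing that $L$ is not surjective but has image of exactly the orbit-tangent dimension $d$. This is where the constant-orbit-dimension hypothesis does the real work, through the associated regular foliation of $X$ and its local transverse quotient; without it, one could have $V$ degenerating onto the fiber at $(z_0,g_0)$ and no local section through that specific point.
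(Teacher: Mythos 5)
Your overall strategy --- show that $\pi\colon V\to\Delta$ is a submersion and invoke the holomorphic implicit function theorem --- is not the paper's, and its crucial middle step has a genuine gap. Everything reduces, as you yourself note, to the claim that $DF_{z_0}(v)-g_0\cdot DH_{z_0}(v)$ lies in $T_{F(z_0)}\bigl(G\cdot F(z_0)\bigr)$ for \emph{every} $(z_0,g_0)\in V$. Your justification is that the orbits form a regular foliation admitting a local transverse projection $\pi_{\mathrm{orb}}$ ``whose fibers are the orbits'', so that $\pi_{\mathrm{orb}}\circ F=\pi_{\mathrm{orb}}\circ(g_0 H)$ near $z_0$. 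But the fibers of the transverse projection in a Frobenius chart are the \emph{plaques}, not the orbits: an orbit that is not locally closed can leave the chart and return in a different plaque, so two points of the same orbit which are both close to $F(z_0)$ need not have the same transverse coordinate. (Take $G=(\C,+)$ acting on $(\C^*)^2$ by $s\cdot(x,y)=(e^sx,e^{\alpha s}y)$ with $\alpha$ irrational: all orbits are one-dimensional, yet a generic orbit meets every small transversal in a countable set accumulating at the base point.) Hence the identity $\pi_{\mathrm{orb}}\circ F=\pi_{\mathrm{orb}}\circ(g_0H)$ is unproved, and with it the surjectivity of $D\pi|_V$. In the concrete case the paper ultimately needs ($SO(3,\C)$ acting on $\WC\setminus\{0\}$) your tangency claim \emph{is} true, but for a different reason: there the orbits are cut out by the global invariant $B(v,v)$, and differentiating $B(F,F)=B(H,H)$ yields exactly the required containment. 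The foliation argument does not deliver this in the generality in which the Proposition is stated and used.

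The paper's proof is constructed precisely to sidestep the submersion question. It produces only a \emph{multisection}: a curve $C\subset V$ through $(0,e)$ which is finite, possibly ramified, of some degree $m$ over a small disc. This gives a genuine section of the relative symmetric product $\Sm$; the paper then uses that $V$ is Stein (after replacing $G$ by its universal cover) together with the relative tubular-neighbourhood/retraction theorem to retract a neighbourhood of the diagonal copy of $V$ inside $\Sm\subset\Delta\times\C^N$ back onto $V$, thereby converting the multisection into an honest section of $\pi$ near $0$. If you wish to keep your route, you must either prove that $DF-g_0\,DH$ is tangent to the orbits by an argument valid for non-locally-closed orbits (countability of the intersection of an orbit with a transversal does not suffice by itself, since the orbit varies with $z$), or restrict to actions with enough global invariants separating the orbits --- which would cover the application but not the Proposition as stated.
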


\begin{proof}
  Let $g_0\in G$ such that $F(0)=g_0\left(H(0)\right)$.
  We may replace $H(z)$ by the function $\tilde H(z)=g_0\left(H(z)\right)$.
  In this way we see: 
  There is no loss of generality in assuming $F(0)=H(0)$.

  We may replace $G$ by its universal covering and therefore assume
  that $G$ is simply-connected. Then we can use the fact that
  every simply-connected complex Lie group is a Stein manifold
  (see \cite{MM}). Hence we may assume that $G$ is a Stein
  manifold. 
  
  Recall that
  \[
  V=\{(z,g)\in\Delta\times G: F(z)=g\left( H(z)\right)\}.
  \]
  Let $\pi:V\to\Delta$ be the natural projection and
  $V_t=\pi^{-1}(t)$ (for $t\in\Delta$).
  Due to Lemma~\ref{fiber-iso}
  there is a bijective map from $V_t$  to the isotropy group of
  the $G$-action at $F(t)$. Since all the $G$-orbits are
  assumed to have the same dimension $d$, each isotropy
  group has the dimension $\dim(G)-d$.
  Therefore
  all the fibers of $\pi:V\to\Delta$ have the
  same dimension (namely $\dim(G)-d$).

    Due to Lemma~\ref{v-smooth} the complex space $V$ is smooth.

    We consider the relative symmetric product $\Sm$, i.e.,
    $\Sm$ is the quotient of
    \[
    \{(z;v_1,\ldots,v_m)\in \Delta\times V^m:\pi(v_i)=z, \,\, \forall i\}
    \]
    under the natural action of the symmetric group $S_m$
    permuting the components of $V^m$.

    $V$ may be embedded into $\Sm$ diagonally as
    \[
    V\ni v=(z,g)\stackrel{\delta}{\mapsto} [(z;v,\ldots,v)]\in\Sm.
    \]
    Note that $V$ is Stein as a closed analytic
    subspace of the Stein manifold $\Delta\times G$.
    It follows that
    \[
    \{(z;v_1,\ldots,v_m)\in \Delta\times V^m:\pi(v_i)=z, \,\, \forall i\}
    \]
    is likewise Stein.
    Thus $\Sm$ is the quotient of a Stein space
    by a finite group (namely $S_m$).
    Hence $\Sm$ is Stein and therefore
    admits
    an embedding $j:\Sm\hookrightarrow\C^N$ for some $N\in\N$.
    Recall that $\Sm$ is a quotient of a subspace of $\Delta\times V^ m$ by a
    $S_m$-action which is trivial on the first factor $\Delta$. The
    natural projection from $\Delta\times V^ m$ onto its first factor
    thus yields a natural map $p:\Sm\to\Delta$.
    
    Now we define an embedding $\xi:\Sm\to \Delta\times\C^N$
    as
    \[
    \xi:w\mapsto (p(w),j(w))
    \]
    We obtain a commutative diagram
    \begin{center}
    \begin{tikzpicture}
      \matrix (m)
              [
                matrix of math nodes,
                row sep    = 3em,
                column sep = 4em
    ]
    {
      V & \Sm & \Delta\times\C^N \\
      \Delta &  \Delta &\Delta \\
    };
    \path
    (m-1-1) edge [->] node [above] {$\delta$} (m-1-2)
    (m-1-2) edge [->] node [above] {$\xi$} (m-1-3)
      (m-2-1) edge  [double distance=3pt] (m-2-2)
      (m-2-2) edge  [double distance=3pt] (m-2-3)
    (m-1-1) edge [->] node [left] {$\pi$} (m-2-1)
    (m-1-2)  edge [->] node [right] {$p$} (m-2-2)
    (m-1-3)  edge [->] node [right] {$pr_1$} (m-2-3)
    ;
    \end{tikzpicture}
    \end{center}

    Here $pr_1:\Delta\times\C^ N\to\Delta$ is the projection
    to the first factor.
    
    Because $V$ is smooth
    (see Lemma~\ref{v-smooth}), the
    ``tubular neighbourhood theorem'' (see \cite{F2},
    Theorem 3.3.3.) implies the existence of a holomorphic retraction of an
    open neighborhood $W$ of $Y=\xi(\delta(V))$ in
    $\Delta\times\C^N$ onto $Y$.

    This can be done in a relative way, over $\Delta$,
    cf.~\cite{F}, Lemma 3.3; \cite{F2}, Theorem 3.3.4.

    Thus there is a holomorphic map $\rho:W\to Y$
    with $\rho|_Y=id_Y$ and $pr_1(w)=pr_1(\rho(w))$ for all
    $w\in W$.
\[
    \begin{tikzcd}
      \xi(\delta(V)) \arrow[r, equal] &
      Y \arrow[dr]
      \arrow[r,shift left,hookrightarrow] &
      W \arrow[l,"\rho",shift left]
      \arrow[r,hookrightarrow,"open"]
      \arrow[d]&
      \Delta\times\C^ N
      \arrow[ld]\\
      &&\Delta&\\
    \end{tikzcd}
\]
    Let $C$ be a (local) smooth complex curve in
    $V$ through the point $(0,e)$ which is not contained
    in the fiber $V_0=\pi^ {-1}(0)$.
    Now $C\cap V_0$ is a discrete subset of $C$.
    Hence by shrinking $C$ we may assume that
    with
    \[
    C\cap V_0=\{(0,e)\}
    \]

    Now $\pi|_C:C\to\Delta$ is a non-constant holomorphic map between
    one-dimensional complex manifolds.
    
    By one-dimensional complex analysis, after appropriately shrinking
    $\Delta$ and $C$, the projection map from $C$ onto $\Delta$
    is a finite ramified covering of some degree $m\in\N$.

    Thus we obtain a local multisection, i.e., there is an
    open neighborhood $\Delta'$ of $0$ in $\Delta$
    and a closed analytic subset $C'\subset \pi^{-1}(\Delta')$ such that
    $\pi|_{C'}:C'\to\Delta'$ is a finite ramified covering. Let $m$
    denote the degree of $\pi|_{C'}$.

    To each point $t\in\Delta'$ we associate the finite
    space $\pi^{-1}(t)\cap C'$.
    This is a finite subspace of $\pi^{-1}(t)\subset V$
    of degree $m$ which is mapped by $\pi$ into one single point of $\Delta$.
    Such a finite subset
    corresponds to a point in the relative symmetric product $\Sm$.
    Therefore our multisection yields a section $s:\Delta'\to \Sm$.
    
    We recall that we assumed $C\cap V_0=\{(0,e)\}$.
    Hence
    \[
    s(0)=\delta(0,e)\ \implies\ \xi(s(0))\in Y\subset W
    \]
    Now $W$ is open, and $(\xi\circ s)(0)\in W$.
    Let $\Delta''=(\xi\circ s)^{-1}(W)$.

    We compose $\xi\circ s:\Delta''\to W$ with the holomorphic retraction
    \[
    \rho:W\to Y = \xi(\delta(V)).
    \]
    Since $\xi\circ\delta:V\to Y$ is an isomorphism,
    there is a unique holomorphic map
    $\sigma:\Delta''\to V$ satisfying
    \[
    \xi\circ\delta\circ\sigma=\rho\circ\xi\circ s,
    \]
    namely $\sigma=(\xi\circ\delta)^{-1}\circ\rho\circ\xi\circ s$.
    
    By construction, this map $\sigma$ is a section of $\pi:V\to\Delta$
    on $\Delta''$, i.e., a holomorphic map
    $\sigma:\Delta''\to V$ with $\pi\circ\sigma(z)=z, \ \forall z\in\Delta''$.

    Recall that $V$ is defined as
    \[
      V=\{(z,g)\in\Delta\times G: F(z)=g\left( H(z)\right)\}.
    \]
    Therefore a section $\sigma$ is given as
    \[
    \sigma:z\mapsto (z,\phi(z))
    \]
    where $\phi:\Delta''\to G$ is a map which fulfills
    \[
    F(z)=\phi(z)\left( H(z)\right)\ \forall z\in\Delta''.
    \]
    This yields the statement of the proposition if we choose $r\in(0,1)$
    such that $\Delta_r\subset\Delta''$.
\end{proof}

\begin{remark}
  It is essential to assume that all the orbits have the same
  dimension. For example, let $G=\C^*$ act on $X=\C$ as usual,
  let $F(z)=z^2$ and $H(z)=z^3$.
  Then $\forall z, \,\, \exists \,\, \lambda\in G: \lambda z^3=z^2$,
  but there is no holomorphic function $\phi:\Delta\to G$ with
  $\phi(z)z^3=z^2$.
\end{remark}
\section{Automorphisms of  Quaternions}

\subsection{Automorphisms of $\H$}

We will need the following classical result
(see e.g.~\cite{CS},\paragraph 6.8). It is based on
the fact that $\H$ is a central simple $\R$-algebra and the
Skolem-Noether theorem (see e.g.~\cite{LF},\paragraph29) which
states that every automorphism of a central simple algebra is inner.

\begin{proposition}\label{aut-h}
  Every ring automorphism of the $\R$-algebra of quaternions $\H$
  is already a $\R$-algebra automorphism (thus $\R$-linear and continuous)
  and preserves the scalar product on $\H$ defined as
  \[
  \left<x,y\right>=\frac 12\left(x\bar y+y\bar x\right)
  \]
  Let $G$ be the 
  group of ring automorphisms of $\H$.
  
Then $G$ is isomorphic to $SO(3,\R)$,
acting trivially on the center $\R$ and by the standard action of $SO(3,\R)$
on $\R^3,$ the orthogonal complement $W$ of $\R,$ if we identify $W$ with
$\R^3$ using the standard basis $i,j,k$ of $W$.

For $q\in\H$ let $G_q$ be its isotropy group,
  i.e., $G_q=\{g\in G=\Aut(\H):g(q)=q\}$.

  Then $G=G_q$, if $q$ is in the center $\R$ of $\H$ and
  $G_q\cong S^1\cong SO(2,\R)$ for $q\not\in\R$.
\end{proposition}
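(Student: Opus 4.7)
\emph{Proof plan.} The strategy is to establish the four assertions in sequence. \textbf{Step 1 (linearity and continuity).} The center $Z(\H)=\R\cdot 1$ is characterized in purely ring-theoretic terms, so any ring automorphism $\phi$ preserves $\R$. The restriction $\phi|_\R$ is a ring endomorphism of $\R$; since every positive real is a square, $\phi|_\R$ preserves positivity, and being the identity on $\Q$ it must be the identity on all of $\R$ by density. Hence $\phi$ is $\R$-linear, and automatically continuous on the finite-dimensional space $\H$.

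\textbf{Step 2 (conjugation and scalar product).} By Skolem--Noether applied to the central simple $\R$-algebra $\H$, every such $\phi$ is inner, $\phi(x)=y^{-1}xy$ for some $y\in\H^*$. Both the reduced norm $\Nm$ (by multiplicativity) and the reduced trace $\Tr$ are invariant under inner automorphisms, so combined with $\phi|_\R=\mathrm{id}$ one obtains
\[
\overline{\phi(x)}=\Tr(\phi(x))-\phi(x)=\Tr(x)-\phi(x)=\phi(\bar x),
\]
i.e., $\phi$ commutes with quaternionic conjugation. Polarizing the norm, $\Nm(x+y)-\Nm(x)-\Nm(y)=x\bar y+y\bar x=2\langle x,y\rangle$, then shows that $\phi$ preserves the scalar product.

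\textbf{Step 3 ($G\cong SO(3,\R)$ and isotropy).} Since $\Tr$ is $\phi$-invariant, the orthogonal complement $W=\{x\in\H:\Tr(x)=0\}=\R i+\R j+\R k$ is $\phi$-stable, and restriction gives an injection $G\hookrightarrow O(W)\cong O(3,\R)$ (injective because $\phi|_\R=\mathrm{id}$ together with $\phi|_W$ determines $\phi$). The image lies in the identity component $SO(3,\R)$ because $G$ is connected: inner automorphisms are parametrized by the connected group $\H^*$ of nonzero quaternions. Surjectivity is the classical spin double cover $S^3\to SO(3,\R)$, $q\mapsto(v\mapsto qvq^{-1})$. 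Finally, for the isotropy, write $q=\alpha+\beta v$ with $v\in W\cap S^2$ and $\alpha,\beta\in\R$: if $\beta=0$ then $q\in\R$ is fixed by every automorphism, so $G_q=G$; if $\beta\neq 0$ then $\phi(q)=q\iff\phi(v)=v$, whence $G_q$ is the stabilizer of the unit vector $v$ in $SO(3,\R)$, namely the circle group of rotations about the axis $\R v$, which is $\cong SO(2,\R)\cong S^1$.

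The one conceptually subtle point is Step~1, the algebraic rigidity of $\R$ (the nonexistence of nontrivial ring automorphisms); after that, everything reduces to Skolem--Noether and the standard description of the quaternionic double cover of $SO(3,\R)$.
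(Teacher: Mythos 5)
Your proof is correct and follows exactly the route the paper indicates (the paper gives no detailed proof, only citing Conway--Smith and pointing to the fact that $\H$ is central simple over $\R$ plus the Skolem--Noether theorem): rigidity of $\R$ to get $\R$-linearity, Skolem--Noether to get innerness, invariance of $\Tr$ and $\Nm$ to get compatibility with conjugation and the scalar product, and the unit-quaternion double cover $S^3\to SO(3,\R)$ for surjectivity and the isotropy computation. No gaps; this is a complete write-up of the argument the paper leaves to the references.
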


\begin{remark}
  We have $\Gc\cong PSL_2(\C)\cong SO(3,\C)$ acting on a three-dimensional
  vector space preserving a non-degenerate bilinear form. This representation
  must be isomorphic to the adjoint representation of $PSL_2(\C)$ on
  its Lie algebra
  $Lie(PSL_2(\C))$ which preserves the {\em Killing form}
  $Kill(x,y)=Trace(ad(x)ad(y))$.
  Let $v\in Lie(PSL_2(\C))\setminus\{0\}$.
  Then $Kill(v,v)=0$ if the one-parameter subgroup $\exp(v\C)$ is
  unipotent, while $\exp(v\C)\cong\C^*$ if $Kill(v,v)\ne 0$.
  The isotropy group of $\Gc$ at $v\in Lie(\Gc)$  is the centralizer
  of $\exp(v\C)$. For $\Gc=PSL_2(\C)$ and $v\ne 0$
  this is always $\exp(v\C)$ itself.
  Hence the isotropy group of $\Gc$ acting on $Lie(\Gc)$ at an element $v\ne 0$
  is isomorphic to $\C^*$ if
  \[
  Kill(v,v)\ne 0
  \]
  and isomorphic
  to $\C$ if $Kill(v,v)=0$.
  Similarily for  $\Gc$ acting on $W\tensor_\R\C$, replacing $Kill(\ ,\ )$
  by $B(\ ,\ )$.
\end{remark}

As a consequence one obtains:

  \begin{corollary}\label{n-tr-auto}
    Let $p,q\in \H\setminus\{0\}$.

    Then the following conditions are equivalent:
    \begin{itemize}
    \item
      $\Nm(p)=\Nm(q)$ and $\Tr(p)=\Tr(q)$.
    \item
      There is an $\R$-algebra automorphism $\phi$ of $\mathbb{H}$ such that
      $\phi(p)=q$.
    \end{itemize}
  \end{corollary}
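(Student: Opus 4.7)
The forward implication is immediate: $\Nm$ and $\Tr$ are preserved by every automorphism of $\H$ since they are defined via the antiinvolution $x\mapsto\bar x$, which commutes with all automorphisms.

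For the reverse implication, the plan is to exploit Proposition~\ref{aut-h}, which identifies $G=\Aut(\H)$ with $SO(3,\R)$ acting trivially on the center $\R$ and by the standard orthogonal action on the complement $W=\R i\oplus\R j\oplus\R k$. First I would decompose $p=p_0+p_1$ and $q=q_0+q_1$ with $p_0,q_0\in\R$ and $p_1,q_1\in W$. A short computation gives $\Tr(p)=2p_0$ and $\Nm(p)=p_0^2+\langle p_1,p_1\rangle$, where $\langle\cdot,\cdot\rangle$ is the scalar product from Proposition~\ref{aut-h} (equivalently, $\Nm(p_1)=-p_1^2=|p_1|^2$). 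Hence the hypotheses $\Tr(p)=\Tr(q)$ and $\Nm(p)=\Nm(q)$ translate into
\[
p_0=q_0\quad\text{and}\quad\langle p_1,p_1\rangle=\langle q_1,q_1\rangle.
\]

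Next I would produce the automorphism. If $p_1=0$, then also $q_1=0$ and $p=p_0=q_0=q$, so the identity works. Otherwise $p_1$ and $q_1$ are nonzero vectors in the Euclidean $3$-space $W$ of equal length, and since $SO(3,\R)$ acts transitively on each sphere $\{v\in W:\langle v,v\rangle=r^2\}$, there is an element $\phi\in SO(3,\R)$ with $\phi(p_1)=q_1$. Through the identification of Proposition~\ref{aut-h}, this $\phi$ is an $\R$-algebra automorphism of $\H$ fixing the center pointwise, so
\[
\phi(p)=\phi(p_0)+\phi(p_1)=q_0+q_1=q,
\]
which is the desired statement.

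There is no real obstacle here; the whole argument is essentially a translation of Proposition~\ref{aut-h} into coordinates, together with the elementary transitivity of $SO(3,\R)$ on concentric spheres in $\R^3$. The only point worth being slightly careful about is the case $p_1=0$, which forces $q_1=0$ and so needs no nontrivial automorphism.
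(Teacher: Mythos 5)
Your proof is correct and is exactly the argument the paper intends: the corollary is stated there as an immediate consequence of Proposition~\ref{aut-h}, with no further details given, and your decomposition $p=p_0+p_1$ together with the transitivity of $SO(3,\R)$ on spheres in $W$ is precisely the omitted verification.
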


\section{Automorphisms of $\R_3=\H\oplus\H$}\label{sect-aut-r3}

$\R_3$ denotes the Clifford algebra associated to $\R^3$ endowed with a
positive definite quadratic form.
As an $\R$-algebra, $\R_3\cong\H\oplus\H$.
This direct sum structure allows an easy determination of the automorphism group
of $\R_3$.

\subsection{Algebraic Preparation}
\begin{lemma}\label{prod-auto}
  Let $R$ be a (possibly non-commutative) ring with $1$ and without zero-divisors.
  Then every ring automorphism of $A=R\oplus R$ is either of the form
  \[
  (x,y)\mapsto (\phi(x),\psi(y))\quad \phi,\psi\in Aut(R)
  \]
  or of the form
  \[
  (x,y)\mapsto (\psi(y),\phi(x))\quad \phi,\psi\in Aut(R)
  \]
\end{lemma}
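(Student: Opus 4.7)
My plan is to identify the two summands of $A = R \oplus R$ intrinsically, as the images of the two non-trivial central idempotents, and then show any automorphism must either preserve or swap them.

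The first step is to classify the central idempotents of $A$. Let $e \in A$ satisfy $e^2 = e$ and $ex = xe$ for all $x \in A$, and write $e = (a,b)$. Then $a^2 = a$ and $b^2 = b$, while $a, b$ are central in $R$. Since $R$ has no zero-divisors and $a(a-1)=0$, one concludes $a \in \{0,1\}$ and likewise $b \in \{0,1\}$. Thus the set of central idempotents is exactly $\{0,\, e_1 := (1,0),\, e_2 := (0,1),\, 1\}$.

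Next, given a ring automorphism $\alpha$ of $A$, I would first check that $\alpha(1)=1$ (applying $\alpha$ to $1 \cdot y = y$ for a preimage $y$ of $1$ gives $\alpha(1) = 1$). Since $\alpha$ preserves the property of being a central idempotent, it permutes $\{0, e_1, e_2, 1\}$; since it fixes $0$ and $1$ it must either fix both $e_1, e_2$ or swap them.

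In the first case, for every $x \in R$ we have $\alpha(x,0) = \alpha((x,0) \cdot e_1) = \alpha(x,0) \cdot e_1 \in e_1 A = R \oplus \{0\}$, so $\alpha$ preserves the first summand; similarly for the second. The restrictions of $\alpha$ to $e_1 A \cong R$ and $e_2 A \cong R$ are then ring automorphisms $\phi, \psi$ of $R$, and any element $(x,y) = (x,0) + (0,y)$ gets mapped to $(\phi(x), \psi(y))$. In the second case, composing $\alpha$ with the involution $\tau : (x,y) \mapsto (y,x)$ (which is an automorphism of $A$) reduces to the first case, producing the second form $(x,y) \mapsto (\psi(y), \phi(x))$.

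I do not anticipate serious obstacles; the only subtlety is the classification of central idempotents, which relies crucially on the no-zero-divisor hypothesis (to force $a(a-1)=0 \Rightarrow a \in \{0,1\}$). Without it one could produce ``mixed'' idempotents and the conclusion would fail.
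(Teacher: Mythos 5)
Your proof is correct and follows essentially the same route as the paper: classify the idempotents of $R\oplus R$ using the no-zero-divisor hypothesis, observe that an automorphism fixes $0$ and $1$ and hence fixes or swaps $(1,0)$ and $(0,1)$, and deduce that it preserves or swaps the two summands. The only cosmetic difference is that you restrict to \emph{central} idempotents while the paper uses all idempotents (both sets coincide here), and you spell out a few routine verifications the paper leaves implicit.
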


\begin{proof}
  Since $R$ has no zero-divisors, the only idempotents are $0$ and $1$,
  i.e.,
  \[
  \{x\in R: x^2=x\}=\{0,1\}
  \]
  As a consequence,
  \[
  I=  \{z=(x,y)\in R\times R: z^2=z\}=\{0,1\}\times\{0,1\}
  \]
  Every ring automorphism $\alpha$ of $A$ must stabilize $I$ and
  fix $0=(0,0)$ and $1=(1,1)$.
  Hence either $\alpha$ fixes $(0,1)$ and $(1,0)$, or
  \[
  (0,1)\stackrel{\alpha}{\longrightarrow}
  (1,0)\stackrel{\alpha}{\longrightarrow}
  (0,1)
  \]
  In the first case, $\alpha$ stabilizes both
  $(0,1)A=\{0\}\times R$ and
  $(1,0)A= R\times\{0\}$,
  which implies that $\alpha(x,y)=(\phi(x),\psi(y))$ for some
  $\phi,\psi\in Aut(R)$.

  Similarily in the second case
  \[
  (x,y)\mapsto (\psi(y),\phi(x))
  \]
  for some $\phi,\psi\in Aut(R)$.
\end{proof}

\subsection{Description of $Aut(\R_3)$}
As a consequence of Lemma~\ref{prod-auto}
and the description of the automorphisms of $\H$, we obtain:

\begin{proposition}
  The automorphism group of the Clifford algebra
  $\R_3\cong\H\oplus\H$ is generated
  by $(z,w)\mapsto(w,z)$ and $SO(3,\R)\times SO(3,\R)$
  where $SO(3,\R)$ acts as the group of all orientation preserving
  ortho\-go\-nal linear transformations of the imaginary parts of the
  factors of the product $\H\times\H$.

  The automorphism group of the complex algebra
  $\R_3\tensor\C$ is generated
  by $(z,w)\mapsto(w,z)$ and $SO(3,\C)\times SO(3,\C)$.
\end{proposition}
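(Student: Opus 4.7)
The plan is to combine the algebraic preparation in Lemma~\ref{prod-auto} with the description of $\Aut(\H)$ given by Proposition~\ref{aut-h}, and, for the complex case, to substitute Lemma~\ref{prod-auto} with a slight variant that avoids its no-zero-divisors hypothesis.

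First I would handle the real case. Applying Lemma~\ref{prod-auto} with $R=\H$ (a division ring, hence without zero divisors), I obtain that every ring automorphism of $\H\oplus\H$ has the form $(x,y)\mapsto(\phi(x),\psi(y))$ or $(x,y)\mapsto(\psi(y),\phi(x))$ with $\phi,\psi\in\Aut(\H)$. Proposition~\ref{aut-h} identifies $\Aut(\H)\cong SO(3,\R)$, acting trivially on the center $\R$ and by the standard representation on the orthogonal complement $W\cong\R^3$ of each factor. This gives exactly the claimed description of $\Aut(\R_3)$ as generated by the swap $(z,w)\mapsto(w,z)$ and $SO(3,\R)\times SO(3,\R)$.

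For the complex algebra $\Hc\oplus\Hc$, Lemma~\ref{prod-auto} does not apply directly because $\Hc\cong M_2(\C)$ contains zero divisors. However, I would replicate the argument of that lemma while working inside the center: every $\C$-algebra automorphism of $\Hc\oplus\Hc$ preserves the center, which is $\C\oplus\C$, and the only nontrivial idempotents of $\C\oplus\C$ are $(1,0)$ and $(0,1)$, so any automorphism either fixes this pair or swaps it. Up to composition with $(z,w)\mapsto(w,z)$ we may therefore assume the automorphism stabilizes each summand $\Hc\oplus 0$ and $0\oplus\Hc$, in which case it splits as $(x,y)\mapsto(\phi(x),\psi(y))$ with $\phi,\psi\in\Aut_{\C}(\Hc)$. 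The Skolem--Noether theorem then identifies $\Aut_{\C}(\Hc)=\Aut_{\C}(M_2(\C))\cong PGL_2(\C)\cong PSL_2(\C)\cong SO(3,\C)$, as already recorded in the remark following Proposition~\ref{aut-h}.

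The main obstacle is thus the zero-divisor issue in the complex case; once it is bypassed via the central-idempotents argument above, everything else is a direct assembly of results already available in the paper.
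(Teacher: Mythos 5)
Your proof of the real case is exactly the paper's: the stated proof there consists of the single remark that the proposition follows from Lemma~\ref{prod-auto} (applied to the zero-divisor-free ring $R=\H$) together with Proposition~\ref{aut-h}, which is precisely your first step. Where you genuinely add something is the complex case. The paper's one-line justification tacitly reuses Lemma~\ref{prod-auto} for $\Hc\oplus\Hc$, but $\Hc\cong M_2(\C)$ has zero divisors and in fact many nontrivial idempotents, so the lemma's hypothesis fails and its proof (which rests on the idempotent set of $R\oplus R$ being exactly $\{0,1\}\times\{0,1\}$) does not carry over verbatim. Your repair --- restrict attention to \emph{central} idempotents, note that an automorphism preserves the center $\C\oplus\C$ and hence permutes $\{(1,0),(0,1)\}$, and then split off the two factors as the two-sided ideals these idempotents generate --- is the correct and standard way to close this gap; combined with Skolem--Noether giving $\Aut_\C(\Hc)\cong PGL_2(\C)\cong SO(3,\C)$ (as in the remark following Proposition~\ref{aut-h}), it yields the complex statement. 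So your argument is the same in spirit as the paper's but is actually more complete: it supplies the justification for the complex half that the paper omits.
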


  \subsection{Isotropy groups}

    Using the above description of the full automorphism group
  and the product structure $\R_3\cong\H\oplus\H$ it is
  easy to determine the isotropy groups of an element
  $(q_1,q_2)$:

  \begin{itemize}
  \item
    The isotropy group contains all automorphisms of the
    form $\phi:(x_1,x_2)\mapsto(\phi_1(x_1),\phi_2(x_2))$
    with $\phi_i\in\Aut(\H)$ and $\phi_i(q_i)=q_i$ ($i\in\{1,2\}$).
  \item
    If there is an automorphism $\alpha\in\Aut(\H)$ with
    $\alpha(q_1)=q_2$, then the isotropy group contains in
    addition all automorphisms of the form
    $\phi:(x_1,x_2)\mapsto(\phi_1(x_2),\phi_2(x_1))$
    with $\phi_i\in\Aut(\H)$ and
    \[
    (q_1,q_2)=(\phi_1(q_2),\phi_2(q_1))
    \]
    In view of $\alpha(q_1)=q_2$ the above
    equation is equivalent to
    \begin{align*}
      &(\alpha\circ\phi_1)(q_2)&=&\alpha(q_1) &=&q_2\\
      &(\alpha^{-1}\circ\phi_2)(q_1)&=&\alpha^{-1}(q_2)&=&q_1\\
    \end{align*}
    Therefore $\alpha\circ\phi_1$ must be in the isotropy group
      of $q_2$ and $\alpha^{-1}\circ\phi_2$ in the isotropy group of $q_1$.
      
  \end{itemize}
  
    \section{Orbits in the complexified algebra}

  The proposition below is principally applied to the situation,
  where $A\cong\H$ and $A=\R\oplus V$ as vector space,
  $V$ being the subspace of totally imaginary elements.

  In fact, we have seen
  that in this case $\Aut(A)$ acts trivially
  on $\R$ and by orthogonal transformations on $V$ such that
  every sphere centered at the origin in $V$ is one orbit.
  
  \begin{proposition}\label{VC-orbits}
  Let $V=\R^n$, and let $G$ be a connected real Lie group
  acting by orthogonal linear transformations on $V$ such that
  the unit sphere $S=\{v\in\R^n:||v||=1\}$ is a $G$-orbit.

  Let $\VC=V\tensor_\R\C$. Let $B$ denote the $\C$-bilinear form
  on $\VC$ extending the standard euclidean scalar product on $V=\R^n$.
  
  Let $\GC$ be the smallest complex Lie subgroup of $GL(\VC)$
  containing $G$. Then the $\GC$-orbits in $\VC$ are the
  following:

  \begin{itemize}
    \item
    $H_\lambda=\{v\in\VC:B(v,v)=\lambda\}$ for $\lambda\in\C^*$.
  \item
    $H_0=\{v\in\VC:B(v,v)=0\}\setminus\{0\}$
  \item
    $\{0\}$.
  \end{itemize}
\end{proposition}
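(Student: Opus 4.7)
The plan is to reduce the proposition to three steps: (i) show $\GC$ preserves $B$, so that orbits lie in the level sets of $\nu:v\mapsto B(v,v)$; (ii) at a real point $x\in S$, use infinitesimal transitivity of $G$ to show the $\GC$-orbit of $x$ is open in $H_1$, and similarly for each $H_\lambda$ with $\lambda\in\C^*$; (iii) upgrade this open orbit to a transitive action on all of $H_\lambda$, and handle $H_0\setminus\{0\}$ by a direct calculation.

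For (i), since $G$ acts by orthogonal transformations on $V=\R^n$, each $g\in G$ preserves the Euclidean scalar product on $V$. By $\C$-bilinearity of $B$ and $\C$-linearity of the extension of $g$ to $\VC$, we obtain $g\in O(B)$. Since $O(B)$ is a closed complex Lie subgroup of $GL(\VC)$ containing $G$, minimality gives $\GC\subseteq O(B)$; hence $B(v,v)$ is constant along each $\GC$-orbit, forcing every orbit into one of the sets $H_\lambda$ or into $\{0\}$.

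For (ii), fix $x\in S$; then $G\cdot x=S\subseteq\GC\cdot x$. At the real point $x\in H_1$ the complex tangent $T_xH_1=\{v\in\VC:B(x,v)=0\}$ splits as $T_xS\oplus i\,T_xS$. The image of the infinitesimal orbit map $\Lie(\GC)\to T_x\VC$, $X\mapsto X\cdot x$, contains $\Lie(G)\cdot x=T_xS$ as well as $(i\,\Lie(G))\cdot x=i\,T_xS$, so it surjects onto $T_xH_1$. Hence $\GC\cdot x$ contains a neighbourhood of $x$ in $H_1$ and is therefore open in $H_1$. Picking a square root $\mu$ of $\lambda$ and applying the same argument at $\mu x\in H_\lambda$ produces an open $\GC$-orbit in each $H_\lambda$ with $\lambda\in\C^*$.

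The main obstacle is (iii): showing that this open orbit exhausts all of $H_\lambda$. Since $G$ is a closed subgroup of the compact group $O(n,\R)$, $\GC$ is a reductive complex algebraic subgroup of $O(B)$ acting algebraically on the smooth irreducible affine quadric $H_\lambda$; by irreducibility the open orbit is unique, and one needs to rule out lower-dimensional orbits in the complement. A direct normal-form reduction via $\GC$-elements coming from $SO(n,\C)$ accomplishes this: writing $v=u+iw\in H_\lambda$ with $u,w\in V$ and $\lambda=a+ib$ gives $\|u\|^2-\|w\|^2=a$, $2u\cdot w=b$; after $G$-rotating $(u,w)$ into the $(e_1,e_2)$-plane, a suitable $SO(2,\C)$-element then reduces $v$ to $\mu e_1$ with $\mu^2=\lambda$. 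For $H_0\setminus\{0\}$, write $v=u+iw$ with $u\cdot w=0$ and $\|u\|=\|w\|=r>0$, move by $G$ to $r(e_1+ie_2)$, and observe that the $SO(2,\C)$-rotation by complex angle $t$ sends $e_1+ie_2$ to $e^{-it}(e_1+ie_2)$; as $t$ ranges over $\C$ this traces the entire null line $\{\xi(e_1+ie_2):\xi\in\C^*\}$, yielding transitivity on $H_0\setminus\{0\}$ upon combination with the $G$-action. For the quaternionic case $n=3$, $G=SO(3,\R)$ used in the application, all this reduces to the classical transitivity of $SO(3,\C)$ on each nonzero level set of $B$.
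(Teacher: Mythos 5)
Your steps (i) and (ii) follow the paper's route: invariance of $B$ under $\GC$ confines every orbit to a level set, and a tangent-space computation exhibits an open orbit in each $H_\lambda$, $\lambda\in\C^*$. Your version of (ii) --- working at $\mu x$ with $x\in S$ real and $\mu^2=\lambda$, where $\Lie(\GC)\cdot(\mu x)=\mu\left(T_xS+iT_xS\right)=T_{\mu x}H_\lambda$ --- is in fact cleaner than the paper's, which asserts for an \emph{arbitrary} $v=u+\iota w$ with $u\ne0$ that the projection of $Gv$ onto a sphere yields $n-1$ $\C$-linearly independent tangent vectors (not a valid implication in general: a real subspace of $\VC$ can project onto all of $V$ while spanning a single complex line). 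Where you genuinely diverge is step (iii): the paper bounds the dimension of \emph{every} nonzero orbit below by $n-1$ and concludes from irreducibility of the quadrics, whereas you prove transitivity on each level set by an explicit normal-form reduction.

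Step (iii) is where your argument has a real gap at the stated level of generality. You move an arbitrary pair $(u,w)$ into the $(e_1,e_2)$-plane by an element of $G$ and then apply a complex rotation in $SO(2,\C)$; but transitivity of $G$ on the unit sphere gives neither transitivity on $2$-frames nor membership of such rotations in $\GC$. Concretely, $G=U(2)\subset SO(4,\R)$ is transitive on $S^3$, yet $\GC\cong GL(2,\C)$ acts on $\VC\cong\C^2\oplus(\C^2)^*$ (the eigenspaces of the complexified complex structure, each $B$-isotropic) as standard plus dual representation, and $H_0\setminus\{0\}$ then splits into \emph{three} $\GC$-orbits. So the gap is not repairable: the proposition is actually false for general $G$, and the paper's own dimension bound fails for the same example (the orbit $\left(W_+\setminus\{0\}\right)\times\{0\}$ has dimension $2<n-1=3$). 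What saves both proofs is the intended application: for $n=3$, transitivity on $S^2$ forces $G=SO(3,\R)$ and $\GC=SO(3,\C)$, and there your normal-form computation is complete and correct --- indeed a more explicit, self-contained alternative to the paper's irreducibility argument. I would either add the hypothesis $G=SO(n,\R)$ with $n\ge3$ (for $n=2$ the null cone again splits into two orbits of $SO(2,\C)\cong\C^*$), or state the result only in the case $n=3$ actually used.
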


\begin{proof}
  First we observe that $B(\ ,\ )$ is invariant under the $\GC$-action,
  because $G$ acts by orthogonal transformations.

  Since $G$ acts transitively on $S$, and $G$ acts linearly on $V$,
  the $G$-orbits in $V$ are precisely $\{0\}$ and the spheres
  $S_r=\{v\in V:||v||=r\}$ ($r>0$).

  Let $v=u+\iota w\in\VC$, $u,w\in V$. If $v\ne 0$, then
  $(u,w)\ne (0,0)$. If $u\ne 0$, then the projection onto the
  real part yields a map from the $G$-orbit through $v$
  onto a sphere in $\R^n$, which implies that the real tangent space
  of $Gv$ contains at least $n-1$ $\C$-linearly independent
  tangent vectors. It follows that the $\GC$-orbit through $v$ is
  of complex dimension $\ge n-1$.

  On the other hand the complex hypersurfaces
  \[
  C_\lambda=\{v\in\VC: B(v,v)=\lambda\}
  \]
  are irreducible, complex $(n-1)$-dimensional and $\GC$-invariant.
  This implies the assertion.
\end{proof}

\begin{corollary}\label{eq-iso-dim}
  Let $A$ be a finite-dimensional $\R$-algebra with $\R$ as center.
  Let $A=\R\oplus V$ as vector space and let $G$ be a real Lie group
  acting trivially on $\R$ and by orthogonal linear transformations on $V$.
  Assume that $G$ acts transitively on the unit sphere of $V$.

  Let $\GC$ be the smallest complex Lie subgroup of
  $GL(A\tensor_\R\C)$ containing $G$.

  Then all the $\GC$-orbits in
  $\left(V\tensor_\R\C\right)\setminus\{0\}$
  are complex hypersurfaces. In particular, they all
  have the same dimension.
\end{corollary}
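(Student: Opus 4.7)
The plan is to deduce this corollary as a direct packaging of Proposition~\ref{VC-orbits}, since that proposition already carries out the geometric work of identifying the $\GC$-orbits on $\VC$. First I would observe that because $G$ acts trivially on the center $\R$ and $\R$-linearly on $V$, the $\GC$-action on $\AC=A\tensor_\R\C$ preserves the induced direct sum decomposition $\AC=\C\oplus\VC$, where $\VC=V\tensor_\R\C$. In particular, the restriction of the $\GC$-action to the invariant subspace $\VC$ satisfies the hypotheses of Proposition~\ref{VC-orbits}: $G$ acts by orthogonal transformations on $V=\R^n$ with the unit sphere as a single orbit, and $\GC\subset GL(\VC)$ is the smallest complex Lie subgroup containing $G$.

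Next I would invoke Proposition~\ref{VC-orbits} directly. Letting $B$ denote the $\C$-bilinear extension to $\VC$ of the standard Euclidean scalar product on $V$, the proposition tells us that the $\GC$-orbits in $\VC$ are exactly
\[
H_\lambda=\{v\in\VC:B(v,v)=\lambda\}\ (\lambda\in\C^*),\quad H_0=\{v\in\VC:B(v,v)=0\}\setminus\{0\},\quad \{0\}.
\]
Thus the $\GC$-orbits in $\VC\setminus\{0\}$ are precisely $H_\lambda$ for $\lambda\in\C^*$ together with $H_0$.

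Finally I would check that each of these orbits is a complex hypersurface. For $\lambda\in\C^*$, the set $H_\lambda$ is the zero set of the non-constant polynomial $v\mapsto B(v,v)-\lambda$ on $\VC\cong\C^n$; since its differential $2B(v,\cdot)$ is nonzero at every $v\in H_\lambda$ (as $B(v,v)=\lambda\ne 0$ forces $v\ne 0$, and $B$ is non-degenerate), $H_\lambda$ is a smooth complex hypersurface of dimension $n-1$. Similarly $H_0\cup\{0\}$ is the zero locus of the irreducible polynomial $B(v,v)$, so it is an irreducible hypersurface of dimension $n-1$; removing the single point $\{0\}$ preserves this dimension, and $H_0$ is a complex hypersurface as well. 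Consequently every $\GC$-orbit in $\VC\setminus\{0\}$ has complex dimension $n-1$, proving the claim.

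I do not expect any substantive obstacle, since the nontrivial geometric content---the transitivity of $\GC$ on the level sets of $B$---is supplied entirely by Proposition~\ref{VC-orbits}; the corollary simply repackages its conclusion as a dimension statement suited for the application to the assumption~\ref{SpecAss} hypothesis ``all orbits of the same dimension'' used in Lemma~\ref{fiber-iso} and Lemma~\ref{v-smooth}.
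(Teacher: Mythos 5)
Your proof is correct and follows exactly the route the paper intends: the paper states Corollary~\ref{eq-iso-dim} without a separate proof precisely because it is an immediate repackaging of Proposition~\ref{VC-orbits}, and your verification that each level set $H_\lambda$ ($\lambda\in\C^*$) and $H_0$ is a complex hypersurface of dimension $n-1$ supplies the small amount of detail left implicit. Nothing is missing.
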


\begin{corollary}\label{dim-c-isotropy}
  Under the assumptions of Corollary~\ref{eq-iso-dim}
  for every point $q\in\left(V\tensor_\R\C\right)\setminus\{0\}$
  the isotropy group $I$ of the $\GC$-action at $q$ satisfies
  \[
  \dim_\C I+\dim_\R V-1=\dim_\C(\GC)
  \]
  In particular, we have $\dim_\C(I)=1$ for $A\cong\H$.
\end{corollary}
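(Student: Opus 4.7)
The plan is to deduce this corollary directly from Corollary~\ref{eq-iso-dim} via the orbit–stabilizer dimension formula for holomorphic Lie group actions. Recall that if a complex Lie group $\GC$ acts holomorphically on a complex manifold $X$ and $q\in X$, then the orbit map $\GC\to \GC\cdot q$ factors through $\GC/I$ as a biholomorphism onto the orbit (considered as an immersed complex submanifold), where $I$ is the isotropy group at $q$. Consequently, one has the standard dimension identity
\[
\dim_\C(\GC\cdot q) + \dim_\C(I) = \dim_\C(\GC).
\]

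First I would fix $q\in(V\tensor_\R\C)\setminus\{0\}$ and apply Corollary~\ref{eq-iso-dim}, which tells us that the $\GC$-orbit through $q$ is a complex hypersurface in $V\tensor_\R\C$. Since $\dim_\C(V\tensor_\R\C)=\dim_\R V$, being a hypersurface means
\[
\dim_\C(\GC\cdot q)=\dim_\R V - 1.
\]
Plugging this into the orbit–stabilizer identity gives the asserted formula
\[
\dim_\C(I)+\dim_\R V - 1 = \dim_\C(\GC).
\]

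For the special case $A\cong\H$, the decomposition $A=\R\oplus V$ has $V$ equal to the space of pure imaginary quaternions, so $\dim_\R V=3$. By Proposition~\ref{aut-h} we have $G\cong SO(3,\R)$ and hence its smallest complex subgroup in $GL(V\tensor_\R\C)$ is $\GC\cong SO(3,\C)$, which has complex dimension $3$. Substituting yields $\dim_\C(I)=3-(3-1)=1$, as claimed.

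There is essentially no obstacle here: the only point that requires any care is checking that the orbits in question are locally closed complex submanifolds so that the dimension formula applies in the form stated — and this is precisely what Corollary~\ref{eq-iso-dim} provides, since the orbits are identified with the smooth irreducible hypersurfaces $C_\lambda=\{v:B(v,v)=\lambda\}$ (and the smooth quadric cone minus the vertex for $\lambda=0$) from Proposition~\ref{VC-orbits}. No further computation is needed.
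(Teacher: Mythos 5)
Your proof is correct and follows the same route as the paper: apply Corollary~\ref{eq-iso-dim} to identify the orbit through $q$ as a complex hypersurface of dimension $\dim_\C(V\tensor_\R\C)-1=\dim_\R V-1$, then use the orbit–stabilizer dimension formula. The explicit check that $\dim_\C SO(3,\C)=3$ for $A\cong\H$ is a welcome addition but changes nothing of substance.
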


\begin{proof}
  The $\GC$-orbit through  $q$ is a complex hypersurface
  (Corollary~\ref{eq-iso-dim}). Hence
  its complex dimension equals $\dim_\C(V\tensor_\R\C)-1$
  and therefore
  \[
  \dim_\C(\GC)=\dim_\C I+\dim_\C(V\tensor_\R\C)-1
  \]
  Now  $\dim_\R(V)=\dim_\C(V\tensor_\R\C)$.
  This yields  the assertion.
\end{proof}

\begin{corollary}\label{omegaconj}
  Under the same assumptions, there is a Zariski open subset,
  namely
  $\Omega\subset A\tensor_\R\C\stackrel{\zeta}
  \sim\C\oplus \left(V\tensor_\R\C\right)$
  defined as $\Omega=\{q:\zeta(q)=(x,v), B(v,v)\ne 0\}$
  such that all the isotropy groups
  of $\GC$ at points in $\Omega$ are conjugate.
\end{corollary}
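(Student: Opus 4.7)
The plan is to reduce the problem to the corresponding statement inside $V\otimes_\R\C$, and then use two simple facts: the $\GC$-action commutes with scalar multiplication, and Proposition~\ref{VC-orbits} tells us exactly what the orbits look like.

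First I would observe that since $\GC$ acts trivially on the center $\C$ (the complexification of $\R$), the isotropy group of $\GC$ at a point $q$ with $\zeta(q)=(x,v)$ coincides with the isotropy group at $v\in V\otimes_\R\C$. Thus it suffices to prove that for any two elements $v_1,v_2\in V\otimes_\R\C$ with $B(v_i,v_i)\ne 0$, the isotropy subgroups $I(v_1), I(v_2)\subset\GC$ are conjugate.

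The key observation is that if $g\in\GC$ fixes $v$, then for any $c\in\C^*$ we have $g(cv)=c\,g(v)=cv$, since $\GC$ acts $\C$-linearly. Hence
\[
I(v)=I(cv)\quad\forall\,c\in\C^*.
\]
Given $v_1,v_2$ with $\lambda_i:=B(v_i,v_i)\in\C^*$, pick any $c\in\C^*$ with $c^2=\lambda_2/\lambda_1$; then $B(cv_1,cv_1)=c^2\lambda_1=\lambda_2$, so $cv_1$ and $v_2$ both lie in the level set $H_{\lambda_2}=\{w:B(w,w)=\lambda_2\}$. By Proposition~\ref{VC-orbits}, $H_{\lambda_2}$ is a single $\GC$-orbit, so there exists $g\in\GC$ with $g\cdot(cv_1)=v_2$. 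Consequently
\[
g\,I(cv_1)\,g^{-1}=I(v_2),
\]
and since $I(cv_1)=I(v_1)$ by the scaling remark, we conclude $gI(v_1)g^{-1}=I(v_2)$.

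This shows that the isotropy groups at all points in $\Omega$ are mutually conjugate in $\GC$, which is exactly the statement of the corollary. I do not foresee a serious obstacle: the only delicate point is not to confuse ``isotropy groups at points in the same orbit are conjugate'' (which is automatic) with ``isotropy groups at points in different orbits are conjugate''—the latter requires the scaling trick $I(v)=I(cv)$, which is available precisely because $\GC$ acts by $\C$-linear maps.
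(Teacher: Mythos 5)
Your proof is correct and is essentially the paper's own argument: both rest on the scaling trick $I(v)=I(cv)$ (valid because $\GC$ acts $\C$-linearly), the triviality of the action on the center, and Proposition~\ref{VC-orbits} identifying the level sets $H_\lambda$ as single orbits. The only cosmetic difference is that you first discard the central component and work entirely in $V\tensor_\R\C$, whereas the paper scales the full element $r+v$ and then adjusts the central part afterwards.
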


\begin{proof}
  By assumption, $A=\R\oplus V$ as vector space and correspondingly
  \[
  A\tensor_\R\C\cong\C\oplus 
  \underbrace{\left(V\tensor_\R\C\right)}_{\VC}
  \]
  With respect to this vector space sum decomposition we define
  \[
  \Omega=\{r+v:r\in\C,v\in \VC, B(v,v)\ne 0\}
  \]
  Due to Proposition~\ref{VC-orbits},
  the $\GC$-orbit through a given point $x=r+v\in\Omega$
  is
  \[
  \{r+w,w\in \VC, B(w,w)=B(v,v)\}.
  \]
  The isotropy groups in the same orbit are clearly conjugate.
  Let $x=r+v,y=s+w\in\Omega$
  (with $r,s\in\C$, $v,w\in\VC$)
  be in different orbits.
  Choose $\lambda\in\C^*$ such that $B(w,w)=B(\lambda v,\lambda v)$.
  Now the  isotropy group at $r+v$ agrees with the isotropy group
  at
  $\lambda(r+v)=\lambda r+\lambda v$, because the group action is linear.
  Since $\GC$ acts trivially on the center $\C$ of $A\tensor_\R\C$,
  the isotropy groups at $\lambda r+\lambda v$ and $s+\lambda v$
  coincide.
  Finally, due to $B(w,w)=B(\lambda v,\lambda v)$ the elements
  $s+\lambda v$ and $s+w$ are in the same $\GC$-orbit.
  Consequently, their
  isotropy groups are conjugate.
  Combined, these facts imply that the isotropy  group at $r+v$ and
  $s+w$ are conjugate.
\end{proof}

\begin{corollary}\label{aut-if-n-tr}
  Let $A=\H$  and $\Ac=A\tensor_{\R}\C$.

  Let $p,q\in\Ac\setminus\{0\}$.
  Then the following properties are
  equivalent:
  \begin{enumerate}
  \item
    $\Tr (p)=\Tr (q)$ and $\Nm(p)=\Nm(q)$.
  \item
    There is an automorphism $\phi\in \Aut(\Ac)$ such that
    $\phi(p)=q$.
  \end{enumerate}
\end{corollary}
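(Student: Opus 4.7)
The plan is to reduce this statement to the orbit description of Proposition~\ref{VC-orbits} via the decomposition of $\Ac$ into its center and its imaginary part.

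The direction (2) $\Rightarrow$ (1) is essentially automatic: the antiinvolution $x\mapsto\bar x$ is intrinsic to the algebra structure of $\Ac=\Hc$ (as the canonical involution on the central simple algebra $\Hc\cong Mat(2\times 2,\C)$), hence commutes with every $\phi\in\Aut(\Ac)$, and therefore both $\Tr(x)=x+\bar x$ and $\Nm(x)=x\bar x$ are automorphism-invariant.

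For the direction (1) $\Rightarrow$ (2), I would use the vector-space decomposition $\Ac=\C\oplus\VC$, where $\C=\C\cdot 1$ is the center and $\VC=V\tensor_\R\C$, with $V$ the span of $i,j,k$. Writing $p=p_0+p_v$ and $q=q_0+q_v$ accordingly, the antiinvolution acts as $x_0+x_v\mapsto x_0-x_v$, and using the identity $v^2=-B(v,v)$ for $v\in\VC$ (where $B$ is the $\C$-bilinear extension of the Euclidean scalar product on $V$) one obtains
\[
\Tr(p)=2p_0,\qquad \Nm(p)=p_0^2-p_v^2=p_0^2+B(p_v,p_v).
\]
Hence $\Tr(p)=\Tr(q)$ forces $p_0=q_0$, and then $\Nm(p)=\Nm(q)$ forces $B(p_v,p_v)=B(q_v,q_v)$. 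By Proposition~\ref{VC-orbits}, the elements $p_v$ and $q_v$ then lie in a common $\GC$-orbit in $\VC$, so one picks $\phi\in\GC$ with $\phi(p_v)=q_v$; since every element of $\GC$ is $\C$-linear and fixes the center $\C\cdot 1$ pointwise, one gets $\phi(p)=p_0+\phi(p_v)=q_0+q_v=q$.

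The only subtlety I anticipate is the isotropic case $B(p_v,p_v)=B(q_v,q_v)=0$: in Proposition~\ref{VC-orbits} the $0$-level set of $B$ is split into the two orbits $\{0\}$ and $H_0\setminus\{0\}$, so to apply the orbit description one needs $p_v$ and $q_v$ to be either both zero or both nonzero (the mixed case corresponds on the matrix side to a scalar matrix versus a Jordan block, which are not conjugate). Outside this degenerate subcase the argument is a formal bookkeeping exercise; handling the degenerate subcase cleanly is the one step where additional care is required.
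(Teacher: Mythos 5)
Your argument follows the same route as the paper's own proof: decompose $\Ac=\C\oplus\WC$, identify $\Tr$ with twice the central component and $\Nm$ with $p_0^2+B(p_v,p_v)$, and invoke Proposition~\ref{VC-orbits} to conjugate the imaginary parts onto each other; the direction $(2)\Rightarrow(1)$ is likewise handled by the invariance of the antiinvolution under $\Aut(\Ac)$.

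The ``subtlety'' you flag at the end deserves a stronger verdict than ``additional care is required'': in the mixed case the implication $(1)\Rightarrow(2)$ simply fails, exactly as your scalar-matrix-versus-Jordan-block picture predicts. Concretely, take $p=1$ and $q=1+i\tensor 1+j\tensor\iota$ in $\Hc$; then $q_v=i\tensor 1+j\tensor\iota$ is nonzero with $B(q_v,q_v)=1-1=0$, so $\Tr(p)=\Tr(q)=2$ and $\Nm(p)=\Nm(q)=1$, yet no $\phi\in\Aut(\Ac)$ sends $p$ to $q$ because every automorphism fixes the center pointwise. So the degenerate subcase cannot be repaired; it is a genuine exception to the corollary as stated, and the paper's own proof glosses over it by quoting Proposition~\ref{VC-orbits} as if $B(p'',p'')=B(q'',q'')=0$ placed $p''$ and $q''$ in a single orbit, which is false when exactly one of them vanishes. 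This does not propagate into the rest of the paper: in Theorem~\ref{mainmainth} the corollary is applied to values of maps into $\Ac\setminus\{0\}$ with identically vanishing trace, so both imaginary parts are automatically nonzero, and in the main equivalence the hypothesis $\cdiv(F)=\cdiv(H)$ rules out the mixed situation. Your proof becomes complete (and more accurate than the published one) once you add the hypothesis that $p_v$ and $q_v$ are both zero or both nonzero, or equivalently note explicitly that the corollary is only ever invoked under conditions excluding the mixed case.
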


\begin{proof}
  By construction, we have $\Nm(x)=x(x^c)=B(x,x)$ for all $x\in\Ac$
  and $x\mapsto \frac 12tr(x)$ equals the projection of $x$ to $\C$
  with respect to the direct sum decomposition $\Ac=\C\oplus W$.

  Let $p=p'+p''$, $q=q'+q''$ with $p',q'\in\C$, $p'',q''\in W$.
  Since $\Aut(\Ac)$ acts trivially on $\C$ and by linear,
  $B$-preserving transformations on $W$, $(ii)$ implies that
  $p'=q'$ and $B(p'',p'')=B(q'',q'')$ which in turn implies
  $\Tr (p)=\Tr (q)$, $\Nm(p)=\Nm(q)$.

  Conversely,
  \[
  (  \Tr (p)=\Tr (q)) \land ( \Nm(p)=\Nm(q)))
  \implies
  (  p'=q') \land\left( B(p'',p'')=B(q'',q'')\right)
  \]
  and the latter condition implies
  $\exists\,\,\phi\in \Aut(\Ac):\phi(p)=q$ due to
  Proposition~\ref{VC-orbits}.
\end{proof}

\section{Constructing an auxiliary vector field}

  \begin{proposition}\label{vert-vf}
    Let $G$ be a complex Lie group acting holomorphically on a
    complex manifold $X$. Assume that all isotropy groups
    are connected and one-dimensional.

    Let $Z$ be a non-compact Riemann surface.
    
    Let $C,F:Z\to X$ be holomorphic maps and let
    \[
    V=\{(g,t)\in G\times Z: g\cdot C(t)=F(t)\}
    \]

    Let $\pi:V\to Z$ be the natural projection map:
    $\pi(g,t)=t$.

    Then there is an integrable holomorphic vector field
    $\X$ on $V$ such that for every $t\in Z$ the associated flow
    (one-parameter-group) stabilizes $V_t=\pi^{-1}(\{t\})$ and
    coincides with the action of the isotropy group
    \[
    G_{F(t)}=\{g\in G: g\cdot F(t)=F(t)\}
    \]
    on $V_t$ (with $g:(h,t)\mapsto (gh,t)$).
    \end{proposition}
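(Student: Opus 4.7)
The plan is to exploit the fact that the isotropy Lie algebras $\Lie(G_{F(t)})$ are one-dimensional and depend holomorphically on $t\in Z$ in order to assemble them into a holomorphic line subbundle $L$ of the trivial bundle $Z\times\Lie(G)$; then to trivialise $L$ using that $Z$ is a non-compact Riemann surface; and finally to convert a nowhere-vanishing holomorphic section of $L$ into a right-invariant vector field on $G$, varying holomorphically with $t$, whose flow is forced to be tangent to $V$.

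More precisely, for $x\in X$ let $\sigma_x\colon\Lie(G)\to T_xX$ be the infinitesimal action $\sigma_x(v)=\left.\tfrac{d}{ds}\right|_{s=0}\exp(sv)\cdot x$, whose kernel is $\Lie(G_x)$. Then $t\mapsto\sigma_{F(t)}$ is a holomorphic family of linear maps from $\Lie(G)$ whose kernels all have the same dimension $1$ by hypothesis. Standard linear-algebra considerations then show that
$L\dfe\{(t,v)\in Z\times\Lie(G):v\in\Lie(G_{F(t)})\}$
is a holomorphic line subbundle of the trivial bundle $Z\times\Lie(G)$. Because $Z$ is a non-compact Riemann surface, it is Stein (Behnke--Stein) and satisfies $H^2(Z,\Z)=0$, so the exponential sheaf sequence gives $H^1(Z,\O^*)=0$ and hence every holomorphic line bundle on $Z$ is trivial. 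In particular $L$ admits a nowhere-vanishing holomorphic section $v\colon Z\to\Lie(G)$ with $v(t)\in\Lie(G_{F(t)})$ for every $t\in Z$.

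With such a section in hand, I would define $\X$ on $V\subset G\times Z$ by
\[
\X(g,t)\dfe\bigl((dR_g)_e\,v(t),\,0\bigr)\in T_gG\oplus T_tZ,
\]
namely the value at $g$ of the right-invariant vector field on $G$ generated by $v(t)\in\Lie(G)$, paired with the zero vector in $T_tZ$. The associated flow on $G\times Z$ is $\Phi_s(g,t)=(\exp(s\,v(t))\cdot g,\,t)$; since $v(t)\in\Lie(G_{F(t)})$ the element $\exp(s\,v(t))$ lies in the isotropy group $G_{F(t)}$, and therefore $\exp(s\,v(t))\cdot g\cdot C(t)=\exp(s\,v(t))\cdot F(t)=F(t)$, which shows that $\Phi_s$ preserves $V$ and each fibre $V_t$; in particular $\X$ is tangent to $V$ and is integrable, with flow defined for all $s\in\C$. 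Under the biholomorphism $V_t\cong G_{F(t)}$ of Lemma~\ref{fiber-iso}, the flow on $V_t$ corresponds to the one-parameter subgroup $\{\exp(s\,v(t)):s\in\C\}$, which is all of $G_{F(t)}$ because the latter is connected, one-dimensional, and $v(t)\neq 0$. The main subtlety is the holomorphic dependence of the subspaces $\Lie(G_{F(t)})$, which is precisely what the constant isotropy-dimension hypothesis secures; the Oka-type triviality of holomorphic line bundles on non-compact Riemann surfaces is what makes the non-compactness hypothesis on $Z$ essential.
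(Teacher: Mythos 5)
Your proof is correct and follows essentially the same route as the paper: form the line bundle of isotropy Lie algebras over $Z$, trivialise it using that every holomorphic line bundle on a non-compact Riemann surface is trivial, and take a nowhere-vanishing section as the generator of the vector field. In fact you supply several details the paper's two-line proof leaves implicit (why the kernels form a holomorphic subbundle, how the section yields a right-invariant field on $V$, and why the resulting one-parameter group exhausts the connected one-dimensional isotropy group); the only cosmetic difference is that the paper phrases the bundle via the isotropy of $C(p)$ rather than of $F(t)$, which are conjugate.
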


  \begin{proof}
    We define a line bundle on $Z$ by associating to each point
    $p\in Z$ the Lie algebra of
    \[
    \{g\in G:g \cdot C(p)=C(p)\}
    \]
    On a non-compact Riemann surface every holomorphic line bundle is
    trivial.
    Thus we obtain $\X$ as any nowhere vanishing section of this
    trivial holomorphic line bundle.
  \end{proof}

\section{Reduction to the case $\Tr=0$}

\begin{lemma}\label{traceless}
  Let $ \Ac$ be a $\C$-algebra with an antiinvolution $(\ )^ c$
  such that $Z=\{x\in\Ac:x=x^ c\}$ is central in $\Ac$.
  Let $\GC$ be a complex Lie group acting by $\C$-algebra homomorphisms
  on $\Ac$ such that the $Z$ is fixed pointwise.
  Let $\dom$ be a domain in $\C$.
  Let $F,H:\dom\to \Ac$ be holomorphic maps.
  Assume that $\Nm(F)=\Nm(H)$ and $\Tr(F)=\Tr(H)$
  (with $\Tr(F)=F+F^ c$ and $\Nm(F)=FF^ c$).

  Define $\hat F=\frac 12(F-F^c)$ and
  $\hat H=\frac 12(H-H^c)$.

  Then:
  \begin{enumerate}
  \item
    $\Tr(\hat F)=0=\Tr(\hat H)$.
  \item
    $\Nm(\hat F)=\Nm(\hat H)$.
  \item
    There exists a holomorphic map $\phi:\dom\to \GC$
    with $F=\phi(H)$ if and only
    there exists a holomorphic map $\phi:\dom\to \GC$
    with $\hat F=\phi(\hat H)$.
  \end{enumerate}
\end{lemma}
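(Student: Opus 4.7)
The plan is to handle the three parts in the order listed, each of which reduces to a short algebraic manipulation using only two ingredients: the centrality of $Z$ (so that $FF^c=F^cF=\Nm(F)$ and $\Tr(F)$ commutes with everything) and the fact that $\GC$ fixes $Z$ pointwise.

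For (1), I would simply note that $(\,)^c$ is an involution, so $(F-F^c)^c=F^c-F=-(F-F^c)$, giving $\hat F^c=-\hat F$ and therefore $\Tr(\hat F)=\hat F+\hat F^c=0$; the identical argument handles $\hat H$. For (2), expand
\[
\Nm(\hat F)=\hat F\cdot\hat F^c=-\hat F^2=-\tfrac14(F-F^c)^2.
\]
Since $\Tr(F)=F+F^c\in Z$ is central, $(F+F^c)^2=\Tr(F)^2$, while $FF^c=F^cF=\Nm(F)$; combining these yields $(F-F^c)^2=\Tr(F)^2-4\Nm(F)$, hence
\[
\Nm(\hat F)=\Nm(F)-\tfrac14\Tr(F)^2.
\]
The same formula holds with $H$ in place of $F$, and the two hypotheses $\Tr(F)=\Tr(H)$, $\Nm(F)=\Nm(H)$ immediately give $\Nm(\hat F)=\Nm(\hat H)$.

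Part (3) is the substantive one, but it too collapses once one observes that $\hat F=F-\tfrac12\Tr(F)$ and $\hat H=H-\tfrac12\Tr(H)$, together with the hypothesis $\Tr(F)=\Tr(H)\in Z$. Given any holomorphic $\phi:\dom\to\GC$, the $\C$-linearity of each $\phi(z)$ and the fact that $\phi(z)$ fixes $Z$ pointwise give
\[
\phi(\hat H)=\phi(H)-\tfrac12\Tr(H),\qquad \phi(H)=\phi(\hat H)+\tfrac12\Tr(H).
\]
Using $\Tr(F)=\Tr(H)$, the equation $F=\phi(H)$ is thereby equivalent to $\hat F=\phi(\hat H)$, and this equivalence holds for the \emph{same} map $\phi$ — no modification is needed.

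I do not anticipate a genuine obstacle here: the whole argument is formal algebra built on facts already established in \S\ref{sect-prep} (centrality of $Z$, $xx^c=x^cx$, and $\Tr(x),\Nm(x)\in Z$). The only point one has to be careful about is that in (3) no auxiliary conjugation compatibility for $\phi$ is invoked — the centrality of $\Tr(H)$ and the equality $\Tr(F)=\Tr(H)$ do all the work, which is precisely why the lemma gives a genuine reduction to the traceless case rather than just an implication in one direction.
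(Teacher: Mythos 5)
Your proposal is correct and follows essentially the same route as the paper: part (1) is the same direct computation, part (2) derives the identity $\Nm(\hat F)=\Nm(F)-\tfrac14\Tr(F)^2$ (the paper phrases it as $\Nm(F)=\tfrac14\Tr(F)^2+\Nm(\hat F)$ by expanding $\Nm(F_0+\hat F)$, using centrality of $F_0$ exactly as you use $FF^c=F^cF$), and part (3) uses precisely the paper's observation that $\phi(z)$ is linear and fixes the central element $\tfrac12\Tr(H)=\tfrac12\Tr(F)$, so the same $\phi$ works in both directions. No gaps.
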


\begin{proof}
  \begin{enumerate}
  \item
    \[
    \Tr(\hat F)=\frac 12\Tr(F-F^ c)=\frac 12\left( F+F^ c-(F^ c+F)\right)=0
    \]
    and similarily $\Tr(\hat H)=0$.
  \item
    Let $F_0=\frac 12\Tr(F)$ and $H_0=\frac 12\Tr(H)$.
    Then $F_0^ c=F_0$ and $H_0^ c=H_0$.
    Observe that $F_0$ and $H_0$ are central in $\Ac$.
    
    By construction
    \[
    F=F_0+\hat F,\ H=H_0+\hat H,\ \hat F^c=-\hat F,
    \hat H^c=-\hat H
    \]
    We obtain:
    \begin{align*}
    \Nm(F)&=\Nm(F_0+\hat F)=\left(F_0+\hat F\right)\cdot
    \left( F_0^ c +(\hat F)^ c\right)\\
&    =\left(F_0+\hat F\right)\cdot
    \left( F_0-\hat F\right)\\
&    =
    F_0^2-F_0\hat F+\hat FF_0-(\hat F)^2\\
&    =
    F_0^2-\hat FF_0+\hat FF_0-(\hat F)^2
    \text{ because $F_0$ is central}\\
&    =
    F_0^2-(\hat F)^2\\
&    = \frac 14\left(\Tr(F)\right)^ 2+\Nm(\hat F)\\
    \end{align*}
    
    Similarily we obtain
    \[
    \Nm(H)=\frac 14\left(\Tr(H)\right)^ 2+\Nm(\hat H)
    \]
    In combination with $\Nm(F)=\Nm(H)$ and $\Tr(F)=\Tr(H)$ this yields
    $\Nm(\hat F)=\Nm(\hat H)$.
  \item
    Note that the image of $\Tr:\Ac\to\Ac$ is contained in the center
    of $\Ac$ which is pointwise stabilized by the $\GC$-action.
    It follows that for every holomorphic map
    $\phi:\dom\to \Ac$ we have
    
    \begin{align*}
      &    F(z)=\phi(z)\left(H(z)\right)\  \forall z\in\dom\\
      &    F_0(z)+\hat F(z) =
     \underbrace{\phi(z)\left(H_0(z)\right)}_{=H_0(z)=F_0(z)}
      +\phi(z)\left(\hat H(z)\right)\\
      &    \hat F(z) =
      \phi(z)\left(\hat H(z)\right)\\
    \end{align*}
      \end{enumerate}
\end{proof}

\section{Vanishing orders}

\begin{proposition}\label{prop-reduce}
Let $X$ be a non-compact Riemann surface, $V$ a vector space
and
let $f,g:X\to V$ be holomorphic maps which do not vanish
identically.

Assume that $f,g$ have the same divisor
(as defined in Definition~\ref{def-vd}).

Then there exist a holomorphic function
$h:X\to\C$ and holomorphic maps $\tilde f, \tilde g: X\to V\setminus\{0\}$
such that $f=h\tilde f$, $g=h\tilde g$.
\end{proposition}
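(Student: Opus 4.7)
The plan is to reduce the claim to the classical fact that on a non-compact Riemann surface every effective divisor is the divisor of a holomorphic function, and then to divide $f$ and $g$ pointwise by such a function.

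More precisely, by hypothesis the common divisor $D=\mathop{div}(f)=\mathop{div}(g)=\sum_{p\in X}m_p\{p\}$ is an effective, locally finite divisor on $X$. The first step is to invoke the Behnke--Stein theorem, which says that every non-compact Riemann surface is Stein, together with the classical Weierstrass-type theorem on Stein curves: every effective divisor on a Stein Riemann surface is the divisor of a holomorphic function $h:X\to\C$. (Equivalently, the exponential sequence together with $H^1(X,\O)=0=H^2(X,\Z)$ for non-compact $X$ gives that the divisor class group is trivial.) Pick such an $h$ with $\mathop{div}(h)=D$.

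Next, write $f=(f_1,\ldots,f_n)$ and $g=(g_1,\ldots,g_n)$ in a chosen basis of $V$. By Definition~\ref{def-vd} one has $\mathop{mult}_p(f_i)\geq m_p=\mathop{mult}_p(h)$ for every $i$ and every $p\in X$, and similarly for $g_i$. Consequently each quotient $f_i/h$ (resp.~$g_i/h$) extends across the zeros of $h$ to a holomorphic function on all of $X$. Setting $\tilde f\eqdef f/h$ and $\tilde g\eqdef g/h$ (componentwise) therefore yields holomorphic maps $\tilde f,\tilde g:X\to V$ with $f=h\tilde f$ and $g=h\tilde g$.

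It remains to verify that $\tilde f$ and $\tilde g$ take values in $V\setminus\{0\}$. Fix $p\in X$. By definition of the divisor, $m_p$ is the minimum of the $\mathop{mult}_p(f_i)$, so at least one index $i_0$ satisfies $\mathop{mult}_p(f_{i_0})=m_p=\mathop{mult}_p(h)$; consequently $(f_{i_0}/h)(p)\neq 0$, hence $\tilde f(p)\neq 0$. The same argument works for $\tilde g$. This gives the required factorization.

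The only non-routine ingredient is the existence of a holomorphic $h$ with prescribed divisor; the remaining verifications are immediate from the definitions. I expect no serious obstacle, provided one is willing to cite (rather than reprove) the Behnke--Stein/Weierstrass result on open Riemann surfaces.
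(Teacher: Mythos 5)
Your proof is correct and follows essentially the same route as the paper: choose a holomorphic $h$ with $div(h)=div(f)=div(g)$ (possible since every divisor on a non-compact Riemann surface is principal) and divide componentwise. The paper states this in two lines; your additional verifications (holomorphic extension of the quotients and non-vanishing of $\tilde f,\tilde g$ via a component realizing the minimal multiplicity) are accurate but left implicit there.
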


\begin{proof}
  Recall that on a non-compact Riemann surface every divisor
  is a {\em principal} divisor, i.e., the divisor of a holomorphic
  function.

  We choose a holomorphic function $h$ on $X$ with
  \[
  div(h)=div(g)=div(f)
  \]
  and define $\tilde f=f/h$, $\tilde g=g/h$.
\end{proof}

\begin{lemma}\label{div-unchanged}
Let $X$ be a Riemann surface, $V$ a vector space
and
$f:X\to V$, $\phi:X\to GL(V)$ be holomorphic maps.
Assume that $f$ is not vanishing identically.

Define $g(z)=\phi(z)\left(f(z)\right)$.

Then $f$ and $g$ have the same divisor.
\end{lemma}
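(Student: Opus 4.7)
The plan is to exploit the symmetry between $f$ and $g$. Since $\phi(z)\in GL(V)$ for every $z\in X$, the determinant $\det\phi$ is a nowhere-vanishing holomorphic function on $X$, and Cramer's rule exhibits $z\mapsto\phi(z)^{-1}$ as a holomorphic map $X\to GL(V)$. In particular we have $f(z)=\phi(z)^{-1}\bigl(g(z)\bigr)$, so $f$ and $g$ are related by the same type of transformation in either direction. Consequently it suffices to establish the pointwise inequality $\mult_p(g)\ge \mult_p(f)$ for every $p\in X$; the reverse inequality then follows by applying the same argument with the roles of $(f,\phi)$ and $(g,\phi^{-1})$ interchanged.

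For the one-sided inequality, the next step is to fix $p\in X$, a local coordinate $z$ centered at $p$, and a basis of $V$ relative to which $f=(f_1,\dots,f_n)$ becomes a tuple of holomorphic functions and $\phi=(\phi_{ij})$ a matrix of holomorphic functions. Setting $m=\mult_p(f)=\min_j \mult_p(f_j)$, by definition each component $f_j$ vanishes at $p$ to order at least $m$. Because
\[
g_i(z)=\sum_j \phi_{ij}(z)\,f_j(z)
\]
is an $\O_{X,p}$-linear combination of the $f_j$, each $g_i$ vanishes to order at least $m$ at $p$, so $\mult_p(g)\ge m=\mult_p(f)$. Combined with the symmetric inequality this gives equality of multiplicities at every point, and therefore $div(f)=div(g)$ as divisors on $X$.

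The lemma is really an observation rather than a deep result, and there is no genuine obstacle: the only point requiring verification is the holomorphy of $\phi^{-1}$, which is guaranteed by the hypothesis that $\phi$ lands in $GL(V)$. The statement is the vector-valued analogue of the familiar fact that for a scalar holomorphic $f$ one has $div(uf)=div(f)$ whenever $u$ is a nowhere-vanishing holomorphic function, and the proof is nothing more than the componentwise version of that fact together with the symmetry given by $\phi^{-1}$.
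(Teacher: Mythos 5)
Your proof is correct and follows essentially the same route as the paper: the one-sided inequality $\mathop{div}(g)\ge \mathop{div}(f)$ from the fact that each component of $g$ is an $\O_{X,p}$-linear combination of the components of $f$, followed by symmetry via the holomorphic inverse $\phi^{-1}$. Your explicit remark that holomorphy of $\phi^{-1}$ comes from Cramer's rule is a small point the paper leaves implicit, but the argument is the same.
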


\begin{proof}
  Let $div(f)=\sum_p m_p\{p\}$. Then for every $p\in X$ and
  $i\in\{1,\ldots,n\}$
  the germ of $f_i$ at $p$ is a divisible by $z_p^{m_p}$ where
  $z_p$ is a local coordinate with $z_p(p)=0$.
  Since $\phi(p)$ is linear, the components $g_i$ likewise have
  germs at $p$ which are 
  divisible by $z_p^{m_p}$.
  Hence $div(g)\ge div(f)$.

  The same arguments show that also $div(f)\ge div(g)$,
  since
  \[
  f(z)=\tilde\phi(z)\left(g(z)\right)
  \]
  for
  \[
  \tilde\phi(z)=\left(\phi(z)\right)^{-1}
  \]

  Thus $div(f)=div(g)$.
\end{proof}

\section{Some cohomology}

We need the lemma below as a preparation for the proof of
Proposition~\ref{coh}.

\begin{lemma}
Let $X$ be a Riemann surface, $D\subset X$ a discrete subset,
$\tau_{0}:P\to X\setminus D$ an unramified $2:1$-covering.

Then $\tau_0$ extends to a ramified covering $\tau:X'\to X$,
i.e., we have a commutative diagram:
\[
\begin{tikzcd}
&  P \arrow[r, hook] \arrow[d,"\tau_0"]& X' \arrow[d,"\tau"]\\
&  X\setminus D \arrow[r,hook]& X
\end{tikzcd}
\]

\end{lemma}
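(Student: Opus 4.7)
The plan is to extend $\tau_0$ locally over each $p\in D$ separately and then glue the local extensions back onto $P$. Because $D$ is discrete in the Riemann surface $X$, for each $p\in D$ I can choose a coordinate disk $U_p\subset X$ with $p$ corresponding to $0\in\Delta$ such that $U_p\cap D=\{p\}$ and such that the various $U_p$ are pairwise disjoint. Then $\tau_0$ restricts to an unramified $2$-sheeted holomorphic covering
\[
\tau_0\colon \tau_0^{-1}(U_p\setminus\{p\}) \longrightarrow U_p\setminus\{p\}\cong\Delta^{*}.
\]

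The next step is to classify this local covering. Since $\pi_1(\Delta^{*})\cong\Z$, the connected coverings of $\Delta^{*}$ are the maps $\Delta^{*}\to\Delta^{*}$, $w\mapsto w^{n}$ for $n\in\N$, and a disconnected covering is a disjoint union of such. Therefore, up to isomorphism, an unramified $2$-sheeted covering of $\Delta^{*}$ is either the trivial covering $\Delta^{*}\sqcup\Delta^{*}\to\Delta^{*}$ (monodromy trivial), or the connected covering $\Delta^{*}\to\Delta^{*}$, $w\mapsto w^{2}$ (monodromy nontrivial). In the first case I extend the local cover by adjoining one point above $p$ to each of the two sheets, obtaining the holomorphic map $\Delta\sqcup\Delta\to\Delta$. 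In the second case I extend by adjoining a single ramification point, obtaining $\Delta\to\Delta$, $w\mapsto w^{2}$. In either case, denote the extended local cover by $\tau_p\colon\widetilde U_p\to U_p$; it is a holomorphic map of Riemann surfaces whose restriction to $\widetilde U_p\setminus\tau_p^{-1}(p)$ is isomorphic (over $U_p\setminus\{p\}$) to the original $\tau_0^{-1}(U_p\setminus\{p\})\to U_p\setminus\{p\}$.

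Finally, I assemble $X'$ by gluing. Fix, for each $p\in D$, an isomorphism of coverings
\[
\psi_p\colon \tau_p^{-1}(U_p\setminus\{p\}) \xrightarrow{\ \sim\ } \tau_0^{-1}(U_p\setminus\{p\}) \subset P,
\]
and define $X'$ to be the pushout of $P$ and $\bigsqcup_{p\in D}\widetilde U_p$ along these identifications. Because the $U_p$ are pairwise disjoint, these gluings interact only with $P$ and not with each other; the resulting space $X'$ is Hausdorff and inherits a canonical Riemann surface structure, with the holomorphic map $\tau\colon X'\to X$ obtained by gluing $\tau_0$ and the $\tau_p$. By construction $\tau$ is a proper $2$-sheeted holomorphic map extending $\tau_0$, ramified precisely at the added points lying over those $p\in D$ with nontrivial local monodromy, i.e., a ramified covering of degree $2$.

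The only genuine ingredient is the classification of the two-sheeted covers of the punctured disk, which is classical; once it is in hand, both the local extension and the gluing are dictated by the explicit models $\mathrm{id}\sqcup\mathrm{id}$ and $w\mapsto w^{2}$, so no further analytic subtlety arises.
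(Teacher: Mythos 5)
Your proof is correct and follows essentially the same route as the paper: classify the two-sheeted covers of the punctured disk via $\pi_1(\Delta^*)\cong\Z$, extend by the explicit models $\mathrm{id}\sqcup\mathrm{id}$ and $w\mapsto w^2$, and patch in around each point of $D$. Your treatment of the gluing step is somewhat more explicit than the paper's, but the argument is the same.
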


\begin{proof}
  Let $q\in D$ and let $\psi:U\to\Delta$ be a coordinate chart
  with $\psi(q)=0$ and $U\cap D=\{q\}$.
  
  An unramified $2:1$ covering over $\Delta^ *=\Delta\setminus\{0\}$
  is either a product or given by a group homomorphism
  $\rho:\pi_1(\Delta^*)\to\Z/2\Z$.
  There is only one non-trivial group homomorphism from
  $\Z\cong\pi_1(\Delta^*)$ to $\Z/2\Z$.

  Hence either $\tau_0$ restricts on $U^ *=U\setminus\{q\}$
  to a direct product $U^ *\times \Z/2\Z$
  (in which case $\tau_0$ trivially extends through $q$),
  or we can identify the restriction to $U^*$ as the only
  non-trivial $2:1$-covering, which can be realized as
  $z\mapsto z^ 2$ as a map from $\Delta^ *$ to $\Delta^ *$.
  Thus we obtain the following commutative diagram:

  \begin{center}
   \begin{tikzcd}
    \tau^ {-1}(U^ *) \arrow[r,"\sim"] \arrow[d,"\tau_0"] &
    \Delta^ * \arrow[d,"z\mapsto z^ 2"]\\
    U^ * \arrow[r,"\psi"] & \Delta^*
  \end{tikzcd}
  \end{center}
  
  Now the covering map $z\mapsto z^ 2$ obviously extends from
an unramified covering $\Delta^ *\to\Delta^*$
  to a ramified covering $\Delta\to\Delta$.

  Performing this procedure around every point of $D$, we obtain the
  desired extension.
\end{proof}

\begin{proposition}\label{coh}
  Let $X$ be a non-compact Riemann surface, $D$ a discrete subset,
  $\cS$ a sheaf which is locally isomorphic to $\underline{\Z}$
  (the sheaf of locally constant $\Z$-valued functions) on $X\setminus D$.
  Let
  \begin{equation}\label{ex-sec-11}
  0 \to \cS \to \O_X \to \As \to 0
  \end{equation}
  be a short exact sequence of $\O_X$ module sheaves.

  Then $H^1(X,\As)=\{0\}$ and $H^2(X,\cS)=\{0\}$.
\end{proposition}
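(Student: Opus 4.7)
The plan is to reduce both vanishing statements to a single topological claim. Since $X$ is a non-compact Riemann surface, the Behnke--Stein theorem asserts that $X$ is Stein, and Cartan's Theorem B then gives $H^{k}(X,\O_{X})=0$ for every $k\ge 1$. The segment
$$H^{1}(X,\O_{X})\longrightarrow H^{1}(X,\As)\longrightarrow H^{2}(X,\cS)\longrightarrow H^{2}(X,\O_{X})$$
of the long exact sequence attached to \eqref{ex-sec-11} then collapses to an isomorphism $H^{1}(X,\As)\cong H^{2}(X,\cS)$, so the whole proposition reduces to showing $H^{2}(X,\cS)=0$.

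To handle $H^{2}(X,\cS)=0$ I would separate the locally constant part of $\cS$ from its behaviour on $D$. With $i:D\hookrightarrow X$ and $j:X\setminus D\hookrightarrow X$ the standard exact sequence
$$0\longrightarrow j_{!}j^{*}\cS\longrightarrow \cS\longrightarrow i_{*}i^{*}\cS\longrightarrow 0,$$
combined with the vanishing of higher cohomology of any skyscraper supported on the discrete set $D$, reduces the problem to $H^{2}(X,j_{!}j^{*}\cS)=0$. The sheaf $j^{*}\cS$ is a rank-one $\Z$-local system on $X\setminus D$, so its monodromy factors through $\Aut(\Z)=\{\pm 1\}$. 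In the trivial-monodromy case $j_{!}j^{*}\cS\cong j_{!}\underline{\Z}_{X\setminus D}$ and the vanishing of its $H^{2}$ is a purely topological statement on the non-compact surface $X$. In the non-trivial case I would invoke the preceding lemma to extend the associated unramified $2:1$ cover $\tau_{0}:P\to X\setminus D$ to a ramified cover $\tau:X'\to X$; because $\tau$ is a proper finite map, $\tau_{*}$ is exact and the Leray spectral sequence degenerates to give an isomorphism $H^{2}(X,\tau_{*}\tau^{*}\cS)\cong H^{2}(X',\tau^{*}\cS)$. On $X'$ the pulled-back local system is trivial, reducing us again to the trivial-monodromy case, and a diagram chase returns $H^{2}(X,\cS)=0$.

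The remaining topological input is that any non-compact connected Riemann surface has the homotopy type of a $1$-dimensional CW complex, so its cohomological dimension for locally constant $\Z$-coefficients is at most $1$, and the same holds on the cover $X'$. Combined with the paragraph above, this yields $H^{2}(X,\cS)=0$ and therefore $H^{1}(X,\As)=0$.

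The main obstacle is the bookkeeping at the points of $D$: the statement only pins down $\cS$ as $\underline{\Z}$ on $X\setminus D$, while at points of $D$ the stalks are determined only indirectly through the embedding $\cS\hookrightarrow\O_{X}$. One must check that the discrepancy between $\cS$ and $j_{*}j^{*}\cS$ is entirely absorbed into the skyscraper part $i_{*}i^{*}\cS$ whose higher cohomology is automatically zero. Once this local picture near $D$ is unwound using the ramified-covering lemma, the topological dimension argument for non-compact surfaces finishes the proof.
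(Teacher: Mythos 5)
Your proposal is correct and follows essentially the same route as the paper: reduce to $H^{2}(X,\cS)=0$ via Steinness, split off the skyscraper at $D$ (the paper's $\cS_0$ is exactly your $j_{!}j^{*}\cS$), trivialize the $\pm1$ monodromy on the ramified double cover $X'$ supplied by the preceding lemma, degenerate Leray, and use $H^{2}=0$ for locally constant coefficients on non-compact surfaces. The only step you leave implicit (your ``diagram chase'') is made explicit in the paper by the short exact sequence $0\to\underline{\Z}_{X}\to\tau_{*}\underline{\Z}_{X'}\to\cS_{0}\to 0$.
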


\begin{proof}
  $X$ is Stein, hence $H^k(X,\O)=0$ for $k>0$. Therefore the
  long exact cohomology sequence associated to the above sequence
  of sheaves \eqref{ex-sec-11} implies
  \[
  H^1(X,\As)\cong H^2(X,\cS).
  \]
  Let $\cS_0$ be the sheaf on $X$ defined by
  \[
  \cS_0(U)=
  \begin{cases} \{0\} & \text{ if $U\cap D\ne\{\}$} \\
    \cS(U) & \text{ if $U\cap D=\{\}$ } \\
  \end{cases}
  \]
  Then we have a short exact sequence
  \begin{equation}\label{ssf}
  0 \to \cS_0 \to \cS \to \cF \to 0
  \end{equation}
  where $\cF$ is a skyscraper sheaf supported on $D$.
  Thus $H^k(X,\cF)=0$ for $k>0$. Consequently
  the long exact cohomology sequence associated to
  \eqref{ssf} implies
  \[
  H^2(X,\cS)\cong H^2(X,\cS_0).
  \]
  By construction, $\cS_0$ is a sheaf on $X\setminus D$
  locally isomorphic to $\underline{\Z}$.
  $(\Z,+)$ admits only one non-trivial automorphism,
  namely $n\mapsto -n$. Hence we may regard $\cS_0$ as the sheaf
  of sections in a locally trivial fiber bundle $B\to X\setminus D$
  with structure
  group $\Z/2\Z$ and typical fiber $\Z$.
  We consider the sub-bundle $P$ with typical fiber $\{+1,-1\}$.
  Such a bundle is 
  an unramified $2\!:\!1$-covering $\tau_0:P\to X\setminus D$.
  It extends to a ramified $2\!:\!1$-covering $\tau:X'\to X$ by adding
  one point above each point of $D$.

  We consider the natural sheaf homomorphism $\uZ_X\to\tau_*\uZ_{X'}$
  given by $f\mapsto f\circ\tau$.
  We define a sheaf homomorphism $\alpha$ from $\tau_*\uZ_{X'}$ to $\cS_0$ as follows:
  Given a $\Z$-valued function $f$ on $\{-1,+1\}$,
  let
  \[d=|f(1)-f(-1)|,\]
  and associate
  \[
  \alpha(f)\stackrel{def}{=}
  \begin{cases} d & \text{ if $f(1)\ge f(-1)$ }\\
    -d & \text{ if $f(1)< f(-1)$ }\\
  \end{cases}
  \]
  This yields a short exact sequence of sheaves
  \[
  0 \to \uZ_X \longrightarrow \tau_*\uZ_{X'}
  \stackrel{\alpha}{\longrightarrow} \cS_0 \to 0
  \]

  We note that outside $D$ the covering map $\tau$ is locally biholomorphic,
  while for a point $p\in D$, the preimage of a small neighborhood of $p$
  in $X$ will be a small neighborhood of $\tau^{-1}(p)$ in $X'$ and
  will be contractible, since $X'$ is a complex space.

  It follows that the higher direct image sheaves $R^q\tau_*\uZ$
  ($q>0$) are trivial. We consider the Leray spectral sequence
  for the sheaf $\uZ$ on $X'$
  and the map $\tau:X'\to X$:
  \[
  H^{p+q}(X',\uZ)=E^{p+q}\,\Leftarrow\, E^{pq}_2=H^p(X,R^ q\tau_*\uZ)
  \]
  Since $R^q\tau_*\uZ=0\,\,\forall q>0$, this spectral sequence
  degenerates.
  Therefore
  \[
  H^k(X,\tau_*\underline{\Z})\cong H^k(X',\uZ)\cong H^k(X',\Z)
  \]
  
  Since $X'$ and $X$ are non-compact Riemann surfaces, we have
  $H^k(X',\Z)=H^k(X,\Z)=0$ for $k\ge 2$ and therefore
  \[
  \ldots \to \underbrace{H^2(X',\Z)}_{=0}
  \to H^2(X,\cS_0) \to \underbrace{H^3(X,\Z)}_{=0}\to\ldots
  \]
  Therefore
  \[
  H^1(X,\As)\cong H^2(X,\cS)\cong H^2(X,\cS_0)=\{0\}.
  \]
\end{proof}

\section{Automorphisms and conjugacy}

For $\H$ and $\Hc$ every $\R$-(resp.~$\C$-) algebra
automorphism $\phi$ is {\em inner}, i.e., given by conjugation
with an element: $\exists \,\, q:\phi:x\mapsto qxq^{-1}$.
Conversely for every $q\in\H$ (resp.~$q\in\Hc$) conjugation by
$q$ defines an automorphism. This automorphism is trivial if and
only if $q$ is in the center.

Therefore we have a short exact sequence of Lie groups.
\begin{equation}\label{short-exact-conju}
1 \to \C^* \to \Hc^* \stackrel{\zeta}{\longrightarrow} \Gc  \to 1
\end{equation}
with $\zeta(g):x\mapsto gxg^{-1}$.

In particular, $\Gc\cong\Hc^ */\C^ *$, which implies that
$\Hc^*$ is a $\C^*$-principal bundle over $\Gc$
(see e.g.~\cite{SM}, Prop.~13.25).

\begin{proposition}\label{p13}
  Let $D$ be an open subset in $\C$ and let $\phi:D\to\Gc$
  be a holomorphic map.

  Then there exists a holomorphic map $\tilde\phi:D\to\Hc^*$ with
  $\phi=\zeta\circ\tilde\phi$.
\end{proposition}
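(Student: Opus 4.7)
The plan is to view the surjection $\zeta:\Hc^*\to\Gc$ from \eqref{short-exact-conju} as a holomorphic principal $\C^*$-bundle, and to construct $\tilde\phi$ as a global holomorphic section of the pullback bundle $\phi^*\Hc^*\to D$. Such a section is tautologically the same datum as a holomorphic map $\tilde\phi:D\to\Hc^*$ satisfying $\zeta\circ\tilde\phi=\phi$, so the whole problem reduces to proving that every holomorphic $\C^*$-principal bundle on $D$ is trivial.

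The preliminary step is to recognise $\zeta$ as a locally trivial holomorphic principal $\C^*$-bundle. This was already invoked in the text via \cite{SM}, Prop.~13.25: the central copy of $\C^*$ acts freely on $\Hc^*$ by multiplication, $\Gc\cong\Hc^*/\C^*$ is a complex Lie group, and $\zeta$ is a surjective holomorphic submersion; local holomorphic sections near any point of $\Gc$ then follow from the implicit function theorem. Pulling back along $\phi:D\to\Gc$ gives a holomorphic principal $\C^*$-bundle $E:=\phi^*\Hc^*\to D$.

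The core step --- triviality of every holomorphic $\C^*$-bundle on $D$, i.e.~$H^1(D,\O_D^*)=0$ --- follows from the exponential sheaf sequence
\[
0 \to \underline{\Z}_D \longrightarrow \O_D \xrightarrow{\exp(2\pi i\,\cdot\,)} \O_D^* \to 0,
\]
whose long exact cohomology gives the fragment
\[
H^1(D,\O_D) \longrightarrow H^1(D,\O_D^*) \longrightarrow H^2(D,\underline{\Z}).
\]
Each connected component of $D$ is an open subset of $\C$, hence a non-compact Riemann surface, hence Stein by Behnke--Stein, so $H^1(D,\O_D)=0$. Topologically, a non-compact Riemann surface is homotopy equivalent to a one-dimensional CW complex, so $H^2(D,\underline{\Z})=0$. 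Squeezing $H^1(D,\O_D^*)$ between two vanishing groups yields $H^1(D,\O_D^*)=0$, hence $E$ is holomorphically trivial.

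A global holomorphic section of $E$ then composes with the natural projection $E\to\Hc^*$ to give the desired lift $\tilde\phi:D\to\Hc^*$ with $\zeta\circ\tilde\phi=\phi$. The only slightly delicate point is the simultaneous use of Steinness (to kill $H^1(\O)$) and of the one-dimensional CW type of $D$ (to kill $H^2(\underline{\Z})$); everything else is routine bookkeeping about principal bundles.
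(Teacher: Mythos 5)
Your proposal is correct and follows the same route as the paper: pull back the $\C^*$-principal bundle $\zeta:\Hc^*\to\Gc$ along $\phi$ and produce the lift as a global holomorphic section, which exists because every holomorphic $\C^*$-bundle on the Stein open set $D\subset\C$ is trivial. The paper simply cites this triviality as a known fact, whereas you supply the standard justification via the exponential sheaf sequence together with $H^1(D,\O_D)=0$ and $H^2(D,\uZ)=0$; this is a legitimate filling-in of detail, not a different method.
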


\begin{proof}
  Due to \eqref{short-exact-conju} we have a $\C^*$-principal bundle on
  $\Gc$ which we may pull back
  via $\phi$ to obtain a $\C^*$-principal bundle on $D$.
  But $D$ is a (not necessarily connected) Stein Riemann surface
  and therefore every $\C^*$-principal bundle on $D$ is holomorphically
  trivial.
  Thus it admits a section. Such a section corresponds to a
  lifting as below
  \[
  \begin{tikzcd}
    & \Hc^* \arrow[d,"\zeta"] \\
    D \arrow[r,"\phi"] \arrow[ur,"\tilde\phi"] & \Gc \\
  \end{tikzcd}
  \]
\end{proof}
  \section{Proof of the Main Theorem}\label{pf-main}

We are now in a position to prove our Main Theorem
\ref{mainth}.

First we prove the most difficult part of Theorem~\ref{mainth}
which is the following.

\begin{theorem}\label{mainmainth}
  Let $A=\H$ and $A_\C=A\tensor_\R\C$.

  Let $\dom\subset\C$ be a domain which is invariant under
  conjugation.
  
  Let $F,H:\dom\to A_\C\setminus\{0\}$ be holomorphic maps
  such that $\overline{F(\bar z)}=F(z)$,
  $\overline{H(\bar z)}=H(z)$,
  $\Tr(F)=\Tr(H)=0$, $\Nm(F)=\Nm(H)$.%
\footnote{We need no condition on $\cdiv$, because the
  assumptions  $\Tr(F)=\Tr(H)=0$ and 
  $F(\dom),H(\dom)\subset A_\C\setminus\{0\}$
  {\em imply} that $\cdiv(F)$ and $\cdiv(H)$ are empty.}

  Then there exists a holomorphic map $\phi:\dom\to\GC$ such that
  \[
  \phi(z)\left(F(z)\right)=H(z)\ \forall z\in\dom.
  \]
\end{theorem}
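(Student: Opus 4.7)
The plan is to find local holomorphic sections $\phi_i : U_i \to \GC$ satisfying $\phi_i(F) = H$ on an open cover, assemble their differences into a \v{C}ech $1$-cocycle valued in an ``isotropy sheaf'' $\Ic$, and patch globally via the cohomological vanishing supplied by Proposition \ref{coh}.

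Because $\Tr(F) = \Tr(H) = 0$, both $F$ and $H$ take values in $\WC \subset \Hc$, and by hypothesis avoid $0$. Corollary \ref{aut-if-n-tr} then gives pointwise equivalence: for each $z \in \dom$, some $g \in \GC$ sends $F(z)$ to $H(z)$. By Corollary \ref{eq-iso-dim}, all $\GC$-orbits in $\WC \setminus \{0\}$ have the same complex dimension, so Assumption \ref{SpecAss} holds. Proposition \ref{loc-eq} (applied with the roles of $F$ and $H$ interchanged) then produces an open cover $\{U_i\}$ of $\dom$ and holomorphic $\phi_i : U_i \to \GC$ with $\phi_i(z)(F(z)) = H(z)$. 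Setting $\psi_{ij}(z) := \phi_i(z) \phi_j(z)^{-1}$ on $U_i \cap U_j$, each $\psi_{ij}(z)$ fixes $H(z)$ and so lies in the isotropy $G_{H(z)} \subset \GC$; the family $(\psi_{ij})$ is a \v{C}ech $1$-cocycle in the abelian sheaf $\Ic$ of holomorphic maps $\mu$ with $\mu(z) \in G_{H(z)}$. If $H^1(\dom, \Ic) = 0$, then writing $\psi_{ij} = \mu_i \mu_j^{-1}$ allows us to define $\phi := \mu_i^{-1} \phi_i$, which is independent of $i$ and is the desired global solution.

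The crux is proving $H^1(\dom, \Ic) = 0$, complicated by a jumping isotropy type: setting $D_0 := \{z \in \dom : \Nm(F(z)) = 0\}$, the remark following Proposition \ref{aut-h} shows $G_{H(z)} \cong \C^*$ on $\dom \setminus D_0$ and $G_{H(z)} \cong \C$ on $D_0$. Assuming $\Nm(F) \not\equiv 0$, the set $D_0$ is discrete. The construction inside the proof of Proposition \ref{vert-vf} shows that the Lie algebras of the isotropies $G_{H(z)}$ form a holomorphic line subbundle $L$ of $\dom \times \Lie(\GC)$, which is trivial because $\dom$ is a non-compact Riemann surface; pick a nowhere-vanishing holomorphic section $v$. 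Fiberwise exponentiation $f \mapsto \exp(f(z) v(z))$ into $\GC$ yields an abelian sheaf epimorphism $\O_\dom \twoheadrightarrow \Ic$ whose kernel $\cS$ is locally $\uZ$ on $\dom \setminus D_0$ (from the kernel $2 \pi i \Z$ of $\exp : \C \to \C^*$) and zero on $D_0$ (where exponentiation onto the additive $\C$ is an isomorphism). This puts us exactly in the setup of Proposition \ref{coh}, giving $H^1(\dom, \Ic) = 0$. In the residual case $\Nm(F) \equiv 0$, every isotropy is already additive, $\Ic \cong \O_\dom$, and its $H^1$ vanishes because $\dom$ is Stein.

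The main obstacle is the cohomological vanishing in the previous paragraph: the kernel $\cS$ is locally $\uZ$ away from $D_0$ but degenerates on $D_0$, exactly the situation for which Proposition \ref{coh} was designed. Everything else amounts to a routine translation of local data into a \v{C}ech coboundary.
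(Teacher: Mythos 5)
Your proposal is correct and follows essentially the same route as the paper's own proof: local solutions from Proposition~\ref{loc-eq}, the abelian isotropy sheaf presented as a quotient of $\O_\dom$ by exponentiating a trivializing section of the line bundle of isotropy Lie algebras (which is exactly what Proposition~\ref{vert-vf} packages as a vertical vector field and its flow), the vanishing $H^1(\dom,\Ic)=0$ from Proposition~\ref{coh} with the jump locus $D_0$ discrete, and the degenerate case $\Nm(F)\equiv 0$ handled by Steinness. The only differences are cosmetic, e.g.\ you take the isotropy of $H$ rather than of $F$.
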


\begin{proof}%[Proof of the theorem]
  Throughout the proof we will use
    the fact that $\dom$ is a non-compact Riemann surface.

    Note that we assume $\Tr(F)=\Tr(H)=0$.
    It follows that the images $F(\dom),H(\dom)$ are contained in $\WC$.

    Thus we may regard $F$ and $H$ as holomorphic maps from $\dom$ to
    $\left(\WC\right)\setminus\{0\}=
    \left(\WC\right)\cap\left(A_\C\setminus\{0\}\right)$.

From our assumption on $F$ and $H$ we deduce that for every $t\in\dom$ there
is an element $g\in G_\C$ with $H(t)=g(F(t))$ (Corollary~\ref{aut-if-n-tr}).

The case where both $F$ and $H$ are constant is trivial.
Hence we may assume that at least one of the two maps is not constant.
Wlog $F$ is not constant.

We define
\[
V=\{(\alpha,z)\in\GC\times\dom : F(z)= \alpha H(z)\}
\]

The isotropy groups for the
$\GC$-action on $(\WC)\setminus\{0\}$ have all the same
dimension (namely $1$ if $A=\H$)
due to Corollary~\ref{dim-c-isotropy}. 

Therefore we may apply Proposition~\ref{loc-eq}.
It follows that for every $p\in\dom$ there is an open neighborhood $U$
of $p$ in $\dom$
and a holomorphic map $\psi:U\to \GC$ with
\[
F(z)=\psi(z)H(z)\ \,\, \forall z\in U.
\]

Thanks to Proposition~\ref{vert-vf} we obtain a vector field which
gives us a one-parameter group acting transitively on the fibers
of $V\to\dom$, i.e., there is an action
\[
\mu:(\C,+)\times V\to V.
\]

We define  $\As$ as the sheaf on $\dom$
of holomorphic maps $\xi$ from $\dom$ to $G_\C$ such that
$\xi(t)$ is contained in the isotropy group
\[
G_{F(t)} =\{g\in G: g(F(t))=F(t)\}
\]
for every
$t$.

Due to Proposition~\ref{vert-vf} we may identify $\As$
with a sheaf 
of fiber-preserving automorphisms, namely for an open subset $U\subset\dom$
we define $\As(U)$ as the set of biholomorphic self maps of 
$\pi^{-1}(U)$
which may be written 
as
\[
\eta_\zeta : x\mapsto \mu(\zeta(\pi(x)),x)
\]
for some holomorphic function $\zeta\in\O(U)$.

The map associating $\eta_\zeta$ to $\zeta$ defines a morphism of
sheaves on $\dom$. Let $\cS$ denote its kernel. 
Then we have a short exact sequence
of sheaves
\[
0 \to \cS \longrightarrow
\O \stackrel{\zeta\mapsto \eta_\zeta}%
   {\xrightarrow{\hspace*{1cm}}}
     \As\to 0
\]
$\cS$ may be regarded as the sheaf of those holomorphic functions $\zeta$
for which $\mu(\zeta(t))$ equals the identity map on the fiber
$\pi^{-1}(t)$.

Generically, the isotropy group of $\GC$ at a point in $\Ac$ is
isomorphic to $\C^*$. Elsewhere, the isotropy group is $\C$.
Hence the sheaf $\cS$ is locally isomorphic to
$\underline{\Z}$ on an open subset $\Omega$
of $\dom$ and trivial on the complement $\dom\setminus\Omega$.

Next we want to show that $H^ 1(\dom,\As)=\{0\}$.

If $\Omega$ is empty, then $\cS$ is trivial and
consequently $\As\cong¸\mathcal O$.
This implies $H^ 1(\dom,\As)=\{0\}$,
because $\dom$ is Stein.

Thus we may assume that $\Omega$ is not empty.
  Then $\Omega $ is Zariski open and dense and its complement
  $\dom\setminus\Omega$ is discrete.
Then $H^1(\dom,\As)=0$ follows from
Proposition~\ref{coh}.

We choose an open cover $(U_i)_i$ of $\dom$ with holomorphic
maps $\psi_i:U_i\to\GC$ such that
\[
F(z)=\psi_i(z)H(z), \,\,\forall z\in U_i
\]
(This is possible thanks to Proposition~\ref{loc-eq}.)

On each intersection $U_{ij}=U_i\cap U_j$ we have
\[
F(z)=\psi_i(z)H(z)\text{ and }
F(z)=\psi_j(z)H(z),
\]
implying that
\[
\psi_{ji}(z)\stackrel{def}{=}\psi_j(z)\circ\left(\psi_i(z)\right)^{-1}
\]
is contained in the isotropy
group at $F(z)$.

Thus $\psi_{ij}$ defines a $1$-cocyle of $\As$. Since
$H^1(\dom,\As)=0$, this cocyle is a coboundary, i.e.,
there are $\phi_i\in\Gamma(U_i,\As)$
with $\psi_{ij}=\phi_i(\phi_j)^{-1}$.

Therefore
\begin{align*}
  &\psi_i(z)(\psi_j(z))^{-1} 
  &=\ &\psi_{ij}(z) =\phi_i(z)(\phi_j(z))^{-1} \\
  \implies\quad & \psi_i(z) &=\ & \phi_i(z)(\phi_j(z))^{-1}(\psi_j(z)) \\
  \implies\quad & ( \phi_i(z))^{-1}\psi_i(z) &=\ &(\phi_j(z))^{-1}(\psi_j(z)) \\
\end{align*}

  It follows that
\[
\phi(z)=( \phi_i(z))^{-1}\psi_i(z)
\]
is well-defined.
Since $F(z)=\psi_i(z) H(z)$ and $\phi_i(z)$ is in the isotropy group
of $G_{\mathbb{C}}$ at $F(z)$, we infer
\[
F(z)=\phi(z) H(z)
\]
This completes the proof.

\end{proof}

\begin{proof}[Proof of the Theorem~\ref{mainth}]

  First we deal with the case where neither $f$ nor $h$
    is slice preserving.
  
  We proceed as follows:
\[
  \begin{tikzcd}
    (i) \arrow[r,Leftrightarrow]
    & (ii) \arrow[r,Leftrightarrow]
     \arrow[d,Rightarrow]
    & (iii)\\
    (v) \arrow[r,Leftrightarrow] & (iv)
    \arrow[ur,Rightarrow]
      \\
  \end{tikzcd}
\]

  $(ii)\implies(iv)$:

  By assumption we have  $\Tr(F)=\Tr(H)$.
  Define
  \[
  \hat F=\frac 12\left(F-F^ c\right),\quad
  \hat H=\frac 12\left(H-H^ c\right)
  \]
  Evidently $\Tr(\hat H)=\Tr(\hat F)=0$.
  Moreover $\Nm(F)=\Nm(H)$ in combination with Lemma~\ref{traceless}
  implies that $\Nm(\hat F)=\Nm(\hat H)$.

  With respect to the decomposition $A_\C=Z\oplus\WC$ the map
  $\hat F$ resp.~$\hat H$ is just the second component of
  $F$ resp.~$H$.
  By the definition of the central divisor (introduced in
  \paragraph\ref{ss-cdiv}) this implies that
  $\cdiv(F)=\cdiv(\hat F)$ and $\cdiv(H)=\cdiv(\hat H)$.
  
  Since $\cdiv(F)=\cdiv(H)$, it follows that there are holomorphic maps
  $\tilde F,\tilde H:\dom\to\Ac\setminus\{0\}$ and
  $\lambda:\dom\to\C$ such that
  \[
  \hat F=\lambda\tilde F, \quad\hat H=\lambda\tilde H
  \]
  (Proposition~\ref{prop-reduce}).
  Observe that
  \begin{align*}
  &0=\Tr(\hat F)=\lambda Tr(\tilde F),\ \Nm(\hat F)=\lambda^ 2N(\tilde F)\\
  &0=\Tr(\hat H)=\lambda Tr(\tilde H),\ \Nm(\hat H)=\lambda^ 2N(\tilde H)\\
  \end{align*}
  and that $\lambda$ does not vanish identically,
    because $f$ and $h$
    are not slice preserving.
  Hence $\Tr(\tilde F)=0=\Tr(\tilde H)$ and $\Nm(\tilde F)=\Nm(\tilde H)$
  and Theorem~\ref{mainmainth} implies that there is a holomorphic map
  $\phi:\dom\to\GC$ such that
  \[
  \phi(z)\left(\tilde F(z)\right)=\tilde H(z)\ \forall z\in\dom.
  \]
  which in turn implies
  \[
  \phi(z)\left(\hat F(z)\right)=\hat H(z)\ \forall z\in\dom,
  \]
  because $\Gc$ acts linearly,
  $\hat F=h\tilde F$ and $\hat H=h\tilde H$.
  Finally
  \[
  \phi(z)\left(F(z)\right)=H(z)\ \forall z\in\dom
  \]
  follows via Lemma~\ref{traceless}.
  
  $(iv)\implies(iii)$:
  The implication $(iv)\implies \cdiv(F)=\cdiv(H)$ is due to
  Lemma~\ref{div-unchanged}, the other assertion is obvious.
  
  For $(iii)\iff(ii)$ see Corollary~\ref{aut-if-n-tr}.

  For $(i)\iff(ii)$, see
  Proposition~\ref{n-tr-stem} and
  Section \ref{ss-cdiv}. 

    $(v)\implies(iv)$. Every $\alpha\in\Hc^*$ defines an automorphism
  of $\Hc$ via $q\mapsto \alpha q\alpha^{-1}$. This defines a natural
  map from $\Hc^*$ to $\Gc$. Composition with this map yields the
  desired implication.

  $(iv)\implies(v)$ follows from Proposition~\ref{p13}.

    This finishes the proof for case $a)$.
  Let us deal now with the case $b)$, i.e. we assume that
  $f$ is slice preserving.

  $(i)\iff(ii)$ is due to the correspondence between slice regular functions
  and stem functions.

  $(ii)\implies(v)\implies(iv)\implies(iii)$ is trivial.

  $(iii)\implies(ii)$: Because $f$ is slice preserving, all the values
  of $F$ are contained in the center $Z\cong\C$ of $\Hc$. But $\GC$
  fixes the center pointwise. Hence the statement.
     \end{proof}

\nocite{LF,FH,SV,F2,B,G,MM,F,BW2,BW1,BW3,AB2,AB1,BS,BG,BM,GSS,GS2,GS1,GP,GPS}

\bibliography{auto}
\bibliographystyle{alpha}
\end{document}